\numberwithin{equation}{section}
\theoremstyle{plain}
\newcommand{\Fbar}{\overline{\mathbb{F}}}
\newcommand{\Z}{\mathbb{Z}}
\renewcommand{\epsilon}{\varepsilon}
\newcommand{\im}{\mathrm{im}}
\newtheorem{thm}{Theorem}[section]
\newtheorem*{thm*}{Theorem}
\newtheorem*{cor*}{Corollary}
\newtheorem*{prop*}{Proposition}
\newtheorem{lem}[thm]{Lemma}
\newtheorem{prop}[thm]{Proposition}
\newtheorem{cor}[thm]{Corollary}
\theoremstyle{definition}
\theoremstyle{remark}
\newtheorem{rem}[thm]{Remark}
\newtheorem{exmp}[thm]{Example}
\begin{document}

\author{Du\v san Dragutinovi\'c}
\keywords{Fibonacci, partitions, final types, $n$-nacci, binomial coefficients, formula, identity}
\address{}
\email{dusandragutinovic1@gmail.com}

\title[$n$-nacci numbers, partitions, and binomial coefficients]{An identity relating $n$-nacci numbers, partitions, and products of binomial coefficients}

\maketitle

\begin{abstract}
We study the combinatorial properties of final types, which are certain non-decreasing sequences of integers, together with the partitions naturally associated with them. As a consequence, we obtain an identity expressing the $n$-nacci numbers as sums of products of binomial coefficients over these partitions, generalizing the classical identity for $n = 2$ that expresses Fibonacci numbers in this way. We also examine how the partial order on the set of all partitions of a fixed integer induced by the ordering of final types compares with two natural partial orders on the same set.
\end{abstract}

\section{Introduction}
\label{sec:intro}
In this note, we study the relations between final types $\nu$, which are certain non-decreasing sequences of length $g\geq 1$ with maximal step $1$, and the partitions $\delta = \delta_{\nu}$ of~$g$ that are defined from them; see Section \ref{sec:sec2} for more details. Our aim is to highlight several combinatorial features of this correspondence and to draw attention to some consequences that seem to be of independent interest.

As a first consequence of this relation, we obtain identities involving products of binomial coefficients and the $n$-nacci numbers, generalizing a well-known result for $n = 2$ concerning the representation of Fibonacci numbers as sums of specific binomial coefficients. 
More precisely, given $g \geq 1$ and $n \geq 1$, let $\Tilde{F}_n(g)$ be  the sequence of $n$-nacci numbers, that is, $$\Tilde{F}_n(g) = \Tilde{F}_n(g - 1) + \ldots + \Tilde{F}_n(g - n),\quad  g \geq n + 1$$  with initial conditions $$\Tilde{F}_n(1) = \ldots = \Tilde{F}_n(n - 1) = 0\quad \text{and}\quad \Tilde{F}_n(n) = 1.$$
In the special case $n = 2$, recall that $\Tilde{F}_2(g)$ is equal to the $g$-th Fibonacci number. Namely, $\Tilde{F}_2(g) = \mathrm{Fibonacci}(g),$ where $$\mathrm{Fibonacci}(g) = \mathrm{Fibonacci}(g - 1) + \mathrm{Fibonacci}(g-2) \text{ for }g \geq 2$$ with 
$$\mathrm{Fibonacci}(1) = 0\quad \text{and}\quad \mathrm{Fibonacci}(2) = 1.$$
It is a classical identity, already observed by Lucas in \cite{luc}, that $\Tilde{F}_2(g) = \mathrm{Fibonacci}(g)$ can be expressed as a sum of binomial coefficients for any $g\geq 2$. For example,
$$5 = \Tilde{F}_2(6) = {4 \choose 0} + {3 \choose 1} + {2 \choose 2} \text{ and } 144 = \Tilde{F}_2(13) = {11 \choose 0} + {10 \choose 1} + {9 \choose 2} + {8 \choose 3} + {7 \choose 4} + {6 \choose 5}.$$
More generally, as noted in~\cite[p.~7]{luc}, one has $\mathrm{Fibonacci}(g+2) = \sum_{i = 0}^{\left \lfloor {g}/{2} \right \rfloor} {g - i \choose i}$, i.e.,
\begin{equation}
\Tilde{F}_2(g + 2) = \sum_{i = 0}^{\left \lfloor \frac{g}{2}\right \rfloor} {g - i \choose i} \text{ for any } g\geq 0.
\label{eqn:fibo_iden}
\end{equation}

Below we observe the following identity, which generalizes \eqref{eqn:fibo_iden} from $n = 2$ to all $n \geq 2$. Although we expect that the case $n \geq 3$ is known in at least some instances, the lack of a suitable reference leads us to include this elegant formula here. For some similar identities, including the generating function of generalized Fibonacci and $n$-nacci numbers and partitions, we refer to \cite{yang}, \cite{huangliu}, \cite{sills}, and \cite{komatsu}.

\begin{thm*} For any $g \geq 1$ and $n \geq 1$, the following identity holds: 
$$\Tilde{F}_n(g + n) = \sum_{\substack{0 \leq \delta_n \leq \ldots \leq \delta_2 \leq \delta_1 \leq g \\ \delta_1 + \delta_2 + \ldots + \delta_n = g}} {\delta_1 \choose \delta_2}{\delta_2 \choose \delta_3}\ldots {\delta_{n -1} \choose \delta_{n}}.$$
\end{thm*}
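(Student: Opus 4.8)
The plan is to show that the right-hand side, which I will abbreviate as
$$S_n(g) = \sum_{\substack{0 \leq \delta_n \leq \cdots \leq \delta_1 \leq g \\ \delta_1 + \cdots + \delta_n = g}} \binom{\delta_1}{\delta_2}\binom{\delta_2}{\delta_3}\cdots\binom{\delta_{n-1}}{\delta_n},$$
counts the compositions of $g$ all of whose parts lie in $\{1, \ldots, n\}$, and then to recognize this count as $\tilde{F}_n(g+n)$. The starting observation is that each summand is a multinomial coefficient: a telescoping cancellation of factorials gives, with the convention $\delta_{n+1} = 0$,
$$\binom{\delta_1}{\delta_2}\binom{\delta_2}{\delta_3}\cdots\binom{\delta_{n-1}}{\delta_n} = \frac{\delta_1!}{(\delta_1 - \delta_2)!\,(\delta_2 - \delta_3)!\cdots(\delta_{n-1}-\delta_n)!\,\delta_n!}.$$
The right-hand side is exactly the number of distinct orderings of a multiset containing $\delta_i - \delta_{i+1}$ copies of the symbol $i$ for each $i = 1, \ldots, n$.

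Next I would set up the grouping that makes this precise. To a composition $g = c_1 + \cdots + c_k$ with each $c_j \in \{1, \ldots, n\}$ I attach the sequence $\delta_i := \#\{\, j : c_j \geq i \,\}$ for $1 \le i \le n$. One checks directly that $\delta_1 \geq \cdots \geq \delta_n \geq 0$, that $\delta_1 = k \le g$, and that $\sum_i \delta_i = \sum_j c_j = g$ by Abel summation, so $\delta$ ranges over precisely the index set of $S_n(g)$. For a fixed such $\delta$, the compositions mapping to it are exactly the orderings of the multiset with $\delta_i - \delta_{i+1}$ parts equal to $i$, and by the displayed identity there are $\binom{\delta_1}{\delta_2}\cdots\binom{\delta_{n-1}}{\delta_n}$ of them. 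Summing over $\delta$ then gives that $S_n(g)$ equals the total number of compositions of $g$ into parts $\le n$.

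Finally I would identify this composition count with $\tilde{F}_n(g+n)$. Conditioning on the first part $c_1 = j \in \{1, \ldots, n\}$ and deleting it leaves a composition of $g - j$ into parts $\le n$, which yields the recurrence $C(g) = C(g-1) + \cdots + C(g-n)$ for the number $C(g)$ of such compositions, with $C(0) = 1$ and $C(m) = 0$ for $m < 0$. Comparing this with the defining recurrence and initial conditions of $\tilde{F}_n$, a short induction gives $C(g) = \tilde{F}_n(g+n)$, completing the proof; the cases $n = 1$ and small $g$ fall out immediately and serve as sanity checks.

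I expect the only genuine idea to be the choice of grouping, namely recognizing that compositions should be indexed by the partition $\delta$ recording the multiplicities (in the conjugate form above) of their parts, so that the product of binomial coefficients becomes a count of orderings. Once this viewpoint is fixed, the telescoping identity and the composition recurrence are both routine, and no delicate estimate or case analysis remains.
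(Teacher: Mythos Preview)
Your argument is correct, and it takes a genuinely different route from the paper's. You recognize the product $\binom{\delta_1}{\delta_2}\cdots\binom{\delta_{n-1}}{\delta_n}$ as a multinomial coefficient and interpret it as the number of compositions of $g$ with exactly $\delta_i-\delta_{i+1}$ parts equal to $i$; summing over $\delta$ gives all compositions of $g$ into parts $\le n$, and the standard first-part recursion yields the $n$-nacci recurrence directly. The paper instead interprets the same sum as the number of final types $\nu\vdash g$ with $\nu^n(g)=0$ (via Proposition~\ref{prop_partitioF_finaltype_identity} and Lemma~\ref{lem:F_n(g)_defining_identity}), and then establishes the recurrence $F_n(g)=F_n(g-1)+\cdots+F_n(g-n)$ by a somewhat lengthy iterated application of Pascal's rule to the auxiliary quantities $G_s(g)$. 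Your route is shorter and purely combinatorial, avoiding the final-type machinery entirely; the paper's route, while heavier, is motivated by the geometric context of Subsection~\ref{subsec:motivation} and keeps the final types---the objects of primary interest---at the center of the argument. Both approaches ultimately reduce the identity to matching a recurrence with initial conditions, but they do so via different combinatorial models: compositions for you, non-decreasing step sequences for the paper.
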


A partition $\delta = \delta_1\ldots \delta_n$ of $g$ consists of $1 \leq \delta_n \leq \delta_{n - 1} \leq \ldots \leq \delta_1 \leq g$ such that $\delta_1 + \ldots + \delta_n = g$. In this case, we write $\delta \vdash g$ and denote the length of $\delta$, that is, the number of elements appearing in this partition, by $|\delta| = n$. For example, if $\delta = 3111$ and $\delta' = 832$, then $\delta \vdash 6,$ $|\delta| = 4, \delta' \vdash 13$, and $|\delta'| = 3$.

A consequence of the preceding theorem for~${n = g}$ provides a neat connection between several combinatorial notions, as shown in the corollary below.

\begin{cor*} For any $g \geq 1$, the following identity holds: 
$$\sum_{\delta \vdash g} \prod_{i = 1}^{|\delta| - 1} {\delta_i \choose \delta_{i + 1}} = 2^{g - 1}.$$
\end{cor*}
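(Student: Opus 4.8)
The plan is to obtain the corollary as the case $n = g$ of the theorem, after reconciling the two ways of indexing the sum and then evaluating the resulting $g$-nacci number. Specializing the theorem to $n = g$ gives
$$\Tilde{F}_g(2g) = \sum_{\substack{0 \leq \delta_g \leq \ldots \leq \delta_1 \leq g \\ \delta_1 + \ldots + \delta_g = g}} {\delta_1 \choose \delta_2}{\delta_2 \choose \delta_3} \ldots {\delta_{g-1} \choose \delta_g},$$
so it suffices to prove, first, that this sum equals $\sum_{\delta \vdash g}\prod_{i=1}^{|\delta|-1}{\delta_i \choose \delta_{i+1}}$, and second, that $\Tilde{F}_g(2g) = 2^{g-1}$.

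For the first statement, I would exhibit a weight-preserving bijection between the index set above --- non-increasing sequences $(\delta_1, \ldots, \delta_g)$ of length exactly $g$ with entries in $\{0, 1, \ldots, g\}$ summing to $g$ --- and the genuine partitions $\delta \vdash g$. Since every partition of $g$ has at most $g$ positive parts, padding a partition $\delta$ with $g - |\delta|$ trailing zeros produces exactly such a sequence, and this assignment is a bijection onto the theorem's index set. For a partition with $|\delta| = k$ parts, the product $\prod_{i=1}^{g-1}{\delta_i \choose \delta_{i+1}}$ splits into the genuine factors $\prod_{i=1}^{k-1}{\delta_i \choose \delta_{i+1}}$ together with the boundary factor ${\delta_k \choose 0} = 1$ and the trailing factors ${0 \choose 0} = 1$; hence the total weight equals $\prod_{i=1}^{|\delta|-1}{\delta_i \choose \delta_{i+1}}$, which is precisely the summand in the corollary. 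This identifies the two sums.

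For the second statement, I would evaluate $\Tilde{F}_g(2g)$ directly from the recurrence. Writing $a_m = \Tilde{F}_g(g + m)$, the initial conditions give $a_0 = 1$ while $a_i = 0$ for $i < 0$, and for $1 \leq m \leq g$ the defining relation becomes $a_m = a_0 + a_1 + \ldots + a_{m-1}$, since the summation window $\{m - g, \ldots, m - 1\}$ reaches the seed $a_0$ without passing below it. A short induction then yields $a_m = 2^{m-1}$ for all $1 \leq m \leq g$, and in particular $\Tilde{F}_g(2g) = a_g = 2^{g-1}$. Combining the two statements proves the corollary; one should also confirm the degenerate case $g = 1$, where the single partition $\delta = 1$ contributes the empty product $1 = 2^0$.

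The main obstacle I anticipate is the bookkeeping in the weight-preservation step: one must check, uniformly over all lengths $1 \leq |\delta| \leq g$, that exactly the expected binomial factors collapse to $1$ under zero-padding while no genuine factor is lost or duplicated, and that the padding map really is a bijection onto the theorem's index set. The $g$-nacci evaluation, by contrast, is a routine induction once the window of the recurrence is tracked correctly.
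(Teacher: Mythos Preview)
Your proposal is correct, and in fact it is precisely the derivation the paper alludes to in the introduction when it presents the corollary as the case $n = g$ of the theorem. However, the paper's written-out proof (Corollary~\ref{cor:nice_identity_g_g}) takes a different, more direct route that avoids the theorem entirely: it counts the final types $\nu \vdash g$ with $\nu(1) = 0$ in two ways. On one hand there are $2^{g-1}$ of them, since each of the remaining $g-1$ steps $\nu(i) - \nu(i-1)$ is free; on the other hand, grouping them by their associated partition $\delta_\nu \vdash g$ and invoking Proposition~\ref{prop_partitioF_finaltype_identity} gives $\sum_{\delta \vdash g} \prod_{i=1}^{|\delta|-1}\binom{\delta_i}{\delta_{i+1}}$. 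Your approach has the virtue of exhibiting the corollary as a clean specialization of the general identity, and your evaluation $\Tilde{F}_g(2g) = 2^{g-1}$ is a pleasant standalone fact; the paper's approach has the virtue of being self-contained and logically prior to the theorem (indeed, the paper proves the corollary in Section~\ref{sec:sec2} before the theorem in Section~\ref{sec:sec3}), so it can serve as an independent check.
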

\noindent
In Corollary \ref{cor:nice_identity_g_g}, we offer an alternative, purely combinatorial proof of the previous corollary, which does not rely on the preceding theorem.

In addition to the material mentioned above, we continue our discussion of the relationship between final types and the partitions defined from them by studying three partial orderings on the set of all partitions of a fixed integer: the order induced by the lexicographic order on final types, $\leq_{ft}$, the order defined in terms of partial partitions, $\leq_{pp}$, and the dominance order, $\leq_{do}$; see Section \ref{sec:order} for more details. In particular, we obtain the following result in Proposition \ref{prop:last}, which shows an interesting property of $\leq_{ft}$, placing it between the two classically studied orders.

\begin{prop*} For any two partitions $\delta \vdash g$ and $\delta' \vdash g$, it holds that $$\delta \leq_{pp} \delta' \implies \delta \leq_{ft} \delta' \implies \delta \leq_{do} \delta'.$$
In general, $\delta \leq_{ft} \delta' \not \implies \delta \leq_{pp} \delta'$ and $\delta \leq_{do} \delta' \not \implies \delta \leq_{ft} \delta'$.
\end{prop*}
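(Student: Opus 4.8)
The plan is to prove the two implications by translating all three relations into concrete statements about the data attached to $\delta$ and $\delta'$, and then to settle the two non-implications by exhibiting explicit pairs of partitions. Throughout I use the dictionary from Section \ref{sec:sec2} between a partition and its associated final type(s), together with the descriptions of $\leq_{pp}$, $\leq_{ft}$, and $\leq_{do}$ recalled in Section \ref{sec:order}. Recall that $\delta \leq_{do}\delta'$ means $\sum_{i\le k}\delta_i\le\sum_{i\le k}\delta_i'$ for every $k$, while $\leq_{ft}$ is read off from a lexicographic comparison of the final types attached to $\delta$ and $\delta'$, and $\leq_{pp}$ is the finest of the three, generated by elementary moves on partial partitions.

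For $\delta\leq_{pp}\delta'\implies\delta\leq_{ft}\delta'$ I would argue on the covering relations of $\leq_{pp}$. Since $\leq_{pp}$ is generated by a single type of elementary move, it suffices, by transitivity of $\leq_{ft}$, to check that one such move $\delta\lessdot_{pp}\delta'$ takes a partition to a $\leq_{ft}$-larger one. Concretely, I would write down the two final types such a covering pair determines and verify that the move can only raise the sequence lexicographically: it alters the final type in a single controlled block while leaving the earlier entries untouched, so the first position at which the two final types differ is decided in favour of $\delta'$. Summing these one-step comparisons along a saturated $\leq_{pp}$-chain from $\delta$ to $\delta'$ yields $\delta\leq_{ft}\delta'$.

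The implication $\delta\leq_{ft}\delta'\implies\delta\leq_{do}\delta'$ is the heart of the matter, and the place I expect the main difficulty. The subtlety is that a lexicographic comparison is a total order on final types of a fixed length, whereas dominance is only partial; so the implication cannot come from lexicographic order alone and must exploit both that $\leq_{ft}$ compares the correct extremal representatives and that final types have all steps at most $1$. The plan is first to prove a dictionary lemma expressing each partial sum $\sum_{i\le k}\delta_i$ through the associated final type $\nu=\nu(\delta)$, by reading it off the heights and run-lengths of $\nu$. Granting this, I would assume $\nu(\delta)\leq_{lex}\nu(\delta')$, let $p$ be the first position at which the sequences differ, and prove by induction on $k$ that $\sum_{i\le k}\delta_i\le\sum_{i\le k}\delta_i'$: on the common initial segment the two sides agree, and beyond $p$ the maximal-step-$1$ constraint keeps the functional attached to $\nu(\delta)$ weakly below the one attached to $\nu(\delta')$. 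The delicate point, and the main obstacle, is exactly this propagation step: one must rule out that a lexicographic gain at position $p$ is later ``paid back'' so as to violate a later partial-sum inequality, and it is here that the step-$\le 1$ condition, rather than lexicographic order per se, does the work.

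Finally, for the two non-implications I would carry out a finite search over small $g$. Since $\leq_{pp}$, $\leq_{ft}$, and $\leq_{do}$ are strictly nested as sets of relations, witnesses exist, and it remains only to name the smallest ones. For $\delta\leq_{ft}\delta'\not\implies\delta\leq_{pp}\delta'$ I would exhibit a pair that is $\leq_{ft}$-comparable but not related by any $\leq_{pp}$-move, for instance a dominance-comparable pair such as $(2,2,2,1)$ and $(3,3,1)$ for $g=7$, which cannot be obtained one from the other by merging parts, and then check that their final types are in fact lexicographically comparable. For $\delta\leq_{do}\delta'\not\implies\delta\leq_{ft}\delta'$ I would produce a dominance-comparable pair whose final types are lexicographically incomparable, again locating the smallest $g$ at which this first occurs. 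Both verifications are direct once the final types are written down, so I would present the explicit partitions, their final types, and the short checks of the relevant relations rather than a general argument.
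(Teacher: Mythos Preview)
Your plan rests on a misreading of the two orders in question, and this is a genuine gap rather than a stylistic difference.

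First, $\leq_{ft}$ is not induced by the lexicographic order on final types, and a partition does not have a single attached final type $\nu(\delta)$. The paper calls the order on final types ``lexicographic'' but then defines it componentwise: $\nu\le\nu'$ means $\nu(i)\le\nu'(i)$ for \emph{every} $i$. This is a partial order, not a total one, so your observation that ``a lexicographic comparison is a total order'' already signals a mismatch. Moreover, by Proposition~\ref{prop_partitioF_finaltype_identity} there are $N(\delta)=\prod_i\binom{\delta_i}{\delta_{i+1}}$ final types with $\delta_\nu=\delta$, and the relation $\delta\leq_{ft}\delta'$ has a $\forall/\exists$ shape: for \emph{every} $\nu$ with $\delta_\nu=\delta$ there must exist \emph{some} $\nu'$ with $\delta_{\nu'}=\delta'$ and $\nu\le\nu'$. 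Your argument for $\leq_{pp}\Rightarrow\leq_{ft}$, which compares ``the two final types such a covering pair determines'', does not address this quantifier structure; the paper handles it in Lemma~\ref{lem:grouping} by showing, via the template~\eqref{eqn:nu_det_by_delta}, that the fixed values of $\nu'$ at the breakpoints lie below those of any admissible $\nu$, so that $\nu$ can always be chosen above a given $\nu'$.

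Second, the paper's $\leq_{do}$ is the \emph{right}-dominance order on tail sums, with the inequality going the other way than you state: $\delta\leq_{do}\delta'$ means $\sum_{i\ge |\delta'|-k}\delta_i'\le\sum_{i\ge |\delta|-k}\delta_i$. What you wrote is $\leq_{do'}$, and since $|\delta|$ and $|\delta'|$ need not agree these do not coincide. Your ``first-difference plus step-$\le 1$ propagation'' argument is tailored to a genuinely lexicographic hypothesis and the left partial sums; the paper's proof of Proposition~\ref{prop:ft_do} instead exploits the componentwise inequality by \emph{choosing} a specific $\nu$ (with $\nu(\sum_{i\ge j}\delta_i+1)=\sum_{i\ge j+1}\delta_i+1$) and reading off the tail-sum inequality from $\nu\le\nu'$ at that index. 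There is no propagation step because the componentwise hypothesis gives an inequality at every coordinate simultaneously.

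For the counterexamples: the paper's witness for $\leq_{do}\not\Rightarrow\leq_{ft}$ is $\delta=22$, $\delta'=31$ at $g=4$ (Remark~\ref{rem:do}); for $\leq_{ft}\not\Rightarrow\leq_{pp}$ it is $\delta=33$, $\delta'=222$ at $g=6$ (Example~\ref{exmp:Example6}). Your $g=7$ pair $331$, $2221$ reduces to the latter via Lemma~\ref{lemma:extension_delta}, but your proposed check (``their final types are lexicographically comparable'') again targets the wrong relation.
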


\emph{Outline.} In Section \ref{sec:sec2}, we present initial results relating final types and partitions. Furthermore, in Subsection \ref{subsec:motivation}, we briefly explain our motivation for studying these combinatorial invariants, which comes from algebraic geometry and the study of $p$-torsion of abelian varieties in characteristic $p$. In Section \ref{sec:sec3}, we prove the theorem stated above. Finally, in Section \ref{sec:order}, we consider another viewpoint motivated by the discussion in Subsection \ref{subsec:motivation}, concerning the three partial orders on the set of all partitions, and we compare them.

\subsection*{Acknowledgment} This project originated at, and was partially supported by, the Mathematical Institute of Utrecht University. Some of the initial results appeared in earlier versions of \cite{dra}, and the author would like to thank Carel Faber and Valentijn Karemaker for their valuable comments on those. 

\section{Final types and partitions}
\label{sec:sec2}
Let $g\geq 1$ be an integer. We first introduce the basic notions of the final types and partitions we will use throughout this article. Final types are non-decreasing sequences of integers with a maximum step of $1$ with some initial properties; see below for the precise definition we will use.
We say that $\delta = \delta_1\delta_2\ldots \delta_n$ is a \textbf{partition} of $g$ if $\delta_i \in \Z$ and $$1 \leq \delta_n \leq \delta_{n - 1} \leq \ldots \leq \delta_2 \leq \delta_1 \leq g\quad \text{and} \quad \delta_1 + \delta_2 + \ldots + \delta_n = g. $$
In this case, we write $$\delta \vdash g \quad \text{and} \quad |\delta| = n.$$

\subsection{Final types}
\label{subsec:final_types}
Consider a function $$\nu: \{0, 1, \ldots, g\} \to \{0, 1, \ldots, g\}$$ such that $$\nu(0) = 0\quad \text{and} \quad \nu(i) \leq \nu(i + i) \leq \nu(i) + 1.$$ We
say that $\nu$ is a \textbf{final type} of length $g$ and write $$\nu \vdash g.$$ Taking into account that $\nu$ is determined by the values of $\nu(i)$ with $1 \leq i \leq g$, we also write $$\nu = (\nu(1), \nu(2), \ldots, \nu(g)), $$
and think of $\nu$ as of sequence of $g$ numbers. 
Furthermore, let us denote $$\nu^i(j) = \underbrace{\nu \circ \ldots \circ \nu}_i(j) \quad \text{with} \quad \nu^0(j) = j, \quad \text{for any}\quad 0 \leq i, j \leq g.$$ 
Lastly, we denote $$f(\nu) = \nu^{g}(g);$$ equivalently, $f(\nu)$ is the largest $f \geq 0$ such that $\nu(i) = i$ for every~${0 \leq i \leq f}$. 

\begin{rem}
In total, there are $2^g$ final types $\nu \vdash g$; namely, this count follows from the property that $\nu(i) = \nu(i - 1)$ or $\nu(i) = \nu(i - 1) + 1$ for any $1 \leq i \leq g$.     
\end{rem}

\subsection{Partitions defined by final types}
\label{subsec:partitions_coming_from_final_types}
Given a final type $\nu \vdash g$, let 
\begin{equation}
\delta_{\nu}(i) = \nu^{i - 1}(g) - \nu^{i}(g)\quad \text{for all} \quad 1 \leq i \leq g.
\label{eqn:definition_delta}
\end{equation}
Note that $$\delta_{\nu}(i +1) \leq \delta_{\nu}(i)\quad \text{for all}\quad 1 \leq i < g.$$ If $n$ is the largest number such that $\delta_{\nu}(n) \neq 0$, then $$\delta_{\nu} = \delta_{\nu}(1)\delta_{\nu}(2) \ldots \delta_{\nu}(n)$$
is a partition of $g - f(\nu)$, i.e., $$\delta_{\nu}(1) + \ldots +  \delta_{\nu}(n) = g - f(\nu).$$ 

On the other hand, let $0 \leq f \leq g$ and let $\delta = \delta_1\delta_2 \ldots \delta_n \vdash g - f$ be  a partition with $\delta = |n|$. By the defining property \eqref{eqn:definition_delta}, final type $\nu \vdash g$ satisfies $\delta_{\nu} = \delta$ if and only if 
$$
\nu(f + \sum_{i = j}^n\delta_i) = f + \sum_{i = j + 1}^n\delta_i, \quad \text{for all} \quad j = 1, \ldots, n,
$$
or, in other words, 
\begin{equation}
  \nu =      (\underbrace{1, 2, \ldots, f}_{f}, \underbrace{*, \ldots, *, f}_{\delta_n}, \underbrace{*, \ldots, *, f + \delta_n}_{\delta_{n-1}}, \underbrace{*, \ldots, *, f + \sum_{i = n-1}^n\delta_i}_{\delta_{n-2}}, \ldots,  \underbrace{*, \ldots, *, f + \sum_{i = 2}^n \delta_i}_{\delta_1}),  
\label{eqn:nu_det_by_delta}
\end{equation}
where each entry $*$ can be any integer as long as $\nu(i) \leq \nu({i + 1}) \leq \nu(i) + 1$ for $0\leq i \leq g - 1$ with $\nu(0) = 0$. 

\subsection{Relations between final types and partitions}
The natural question is: given a partition $\delta \vdash g-f$ for some $g \geq 1$ and $0 \leq f \leq g$, what is the number $N(\delta)$ of final types~$\nu \vdash g$ such that $\delta = \delta_{\nu}$. We compute this in the following proposition. 

\begin{prop}
Let $g \geq 1$, $0 \leq f \leq g$ and let $\delta = \delta_1\delta_2\ldots\delta_n \vdash g - f$ be a partition of $g - f$ with $|\delta| = n$.  
Then, there are exactly $$N(\delta) = \prod_{i = 1}^{n-1}{\delta_i \choose \delta_{i + 1}}$$ final types $\nu \vdash g$ such that $\delta = \delta_{\nu}$. 
\label{prop_partitioF_finaltype_identity}
\end{prop}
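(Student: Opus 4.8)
The plan is to take as given the explicit description \eqref{eqn:nu_det_by_delta} of those final types $\nu \vdash g$ with $\delta_\nu = \delta$, and to count directly the admissible ways of filling in the undetermined entries (the $*$'s). According to that description, such a $\nu$ splits into an initial block $1, 2, \ldots, f$ of length $f$, followed by $n$ consecutive blocks of lengths $\delta_n, \delta_{n-1}, \ldots, \delta_1$, where the block of length $\delta_j$ is constrained to end with the value $e_j := f + \sum_{i=j+1}^{n} \delta_i$. It is convenient to let $j$ range over $1 \le j \le n+1$, so that $e_{n+1} = f$ is exactly the final value of the initial block; thus the endpoint values $e_{n+1}, e_n, \ldots, e_1$ of the successive blocks are all prescribed by $\delta$, independently of the interior $*$'s. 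The entire freedom therefore lies in the interior of each block, subject only to the step condition $\nu(i) \le \nu(i+1) \le \nu(i)+1$ from \eqref{eqn:definition_delta}.

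First I would analyze one block in isolation. The block of length $\delta_j$ occupies $\delta_j$ consecutive positions, across which $\nu$ rises from the fixed value $e_{j+1}$ (the end of the preceding block) to the fixed value $e_j$; its total increase is $e_j - e_{j+1} = \delta_{j+1}$, where we set $\delta_{n+1} := 0$. Since each of the $\delta_j$ steps equals either $0$ or $+1$, prescribing the block amounts precisely to choosing which $\delta_{j+1}$ of these $\delta_j$ steps are increments, and every such choice produces a valid and distinct block. As $\delta$ is non-increasing we have $\delta_{j+1} \le \delta_j$, so this count is the genuine binomial coefficient $\binom{\delta_j}{\delta_{j+1}}$.

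Because the endpoints $e_j$ are determined in advance, the completions of different blocks are mutually independent, and multiplying the per-block counts yields
$$N(\delta) = \prod_{j=1}^{n} \binom{\delta_j}{\delta_{j+1}} = \prod_{j=1}^{n-1} \binom{\delta_j}{\delta_{j+1}},$$
since the $j = n$ factor is $\binom{\delta_n}{0} = 1$; equivalently, the block of length $\delta_n$ must consist entirely of the value $f$ and carries no freedom.

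The argument is routine once this reduction is set up, and the only place I would check carefully is the bookkeeping at the two ends: that the block of length $\delta_j$ truly spans $\delta_j$ steps with total rise exactly $\delta_{j+1}$, and that the conventions $e_{n+1} = f$ and $\delta_{n+1} = 0$ make the initial block of length $f$ glue correctly onto the block of length $\delta_n$. A small instance such as $g = 3$, $f = 0$, $\delta = 21$, which produces the two final types $(0,0,1)$ and $(0,1,1)$ in agreement with $\binom{2}{1} = 2$, serves as a sanity check on the indexing.
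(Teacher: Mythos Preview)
Your proof is correct and follows essentially the same approach as the paper: both arguments use the explicit description \eqref{eqn:nu_det_by_delta}, observe that filling a block of $k$ steps with prescribed endpoints $m$ and $M$ amounts to choosing which $M-m$ of the $k$ steps are increments, and then multiply the resulting binomial coefficients $\binom{\delta_j}{\delta_{j+1}}$ over the blocks. Your version is somewhat more explicit about the endpoint bookkeeping and the trivial $j=n$ factor, but there is no substantive difference in method.
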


\begin{proof}
 The number of sequences $(\nu'(1), \ldots, \nu'(k))$ such that $$\nu'(0) = m,\quad \nu'(k) = M,\quad \text{and}\quad \nu'(i - 1) \leq \nu'(i) \leq \nu'(i -1) + 1,\text{ } 1\leq i \leq k$$ equals ${k \choose M - m}$: out of $k$ indices $1 \leq i \leq k$, we choose $M-m$ for which $\nu'(i - 1) < \nu'(i)$.
 
Let $\delta = \delta_1\delta_2\ldots\delta_n \vdash g - f$. 
As we noted in Subsection \ref{subsec:partitions_coming_from_final_types}, the final types $\nu \vdash g$ such that $\delta_{\nu} = \delta$ must be of the form~\eqref{eqn:nu_det_by_delta}. Using the observation of the preceding paragraph, we conclude that the number of final types $\nu \vdash g$ such that $\delta_{\nu} = \delta$ equals the product of ${\delta_{i} \choose \delta_{i + 1}}$ for $1 \leq i < n$. 
\end{proof}

As a nice combinatorial consequence, we find the following identity. 

\begin{cor}
\label{cor:nice_identity_g_g}
For any $g \geq 1$, the following identity holds: $$ \sum_{\delta \vdash g }\prod_{i = 1}^{|\delta| - 1} {\delta_{i} \choose \delta_{i + 1}} = 2^{g - 1}, $$ where the sum is indexed over the set of all partitions $\delta$ of $g$, and $|\delta| = n$ if~${\delta = \delta_1\ldots\delta_n}$. 
\end{cor}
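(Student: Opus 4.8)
The plan is to deduce the identity directly from Proposition \ref{prop_partitioF_finaltype_identity} together with the count of $2^g$ final types of length $g$ recorded in the Remark of Subsection \ref{subsec:final_types}, so that the argument rests only on the combinatorics of final types and never on the $n$-nacci theorem. For $m \geq 1$ I would write
$$S(m) = \sum_{\delta \vdash m}\prod_{i=1}^{|\delta|-1}{\delta_i \choose \delta_{i+1}},$$
and set $S(0) = 1$ under the convention that the empty partition contributes an empty product (this single term corresponds to the identity final type). The quantity to be computed is exactly $S(g)$, and the goal is to show $S(g) = 2^{g-1}$.

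The key step is to stratify the set of all final types $\nu \vdash g$ according to the partition $\delta_\nu$ attached to them in Subsection \ref{subsec:partitions_coming_from_final_types}. Each $\nu \vdash g$ determines a value $f(\nu)$ and a partition $\delta_\nu \vdash g - f(\nu)$; conversely, for a partition $\delta \vdash m$ the equality $\delta_\nu = \delta$ forces $g - f(\nu) = m$, so $f(\nu) = g - m$ is recovered from $\delta$ alone, and the hypothesis $0 \le f \le g$ of Proposition \ref{prop_partitioF_finaltype_identity} becomes simply $0 \le m \le g$. Thus as $\delta$ ranges over all partitions of all integers $m$ with $0 \le m \le g$ (the case $m = g$ giving $f=0$, and $m = 0$ giving the single identity final type $\nu = (1,\ldots,g)$), every final type of length $g$ is obtained exactly once. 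Since Proposition \ref{prop_partitioF_finaltype_identity} says $\delta$ has precisely $N(\delta) = \prod_{i=1}^{|\delta|-1}{\delta_i \choose \delta_{i+1}}$ preimages, summing over a fixed $m$ yields $S(m)$ and summing over all $m$ gives
$$2^g = \#\{\nu \vdash g\} = \sum_{m=0}^{g}\sum_{\delta \vdash m} N(\delta) = \sum_{m=0}^{g} S(m).$$

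With the relation $\sum_{m=0}^{g} S(m) = 2^g$ in hand for every $g \ge 0$, the corollary follows by telescoping: subtracting the instances for $g$ and $g-1$ gives $S(g) = 2^g - 2^{g-1} = 2^{g-1}$ for $g \ge 1$, and the base values $S(0) = 1 = 2^0$ and $S(1) = 1$ are immediate. I expect the only real care to lie in the bookkeeping of the stratification — verifying that $f(\nu)$ is genuinely determined by $\delta_\nu$, that the empty partition (equivalently, the identity final type) is correctly accounted for, and that $N(\delta)$ depends on $\delta$ alone and not on the ambient $g$ — rather than in any substantial computation; once $\nu \mapsto (f(\nu), \delta_\nu)$ is seen to be a weight-preserving surjection onto partitions, the telescoping is automatic. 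As an alternative one could run a direct induction on $g$ from the same total-count relation, but the telescoping formulation seems cleanest.
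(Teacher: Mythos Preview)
Your argument is correct, but it takes a small detour compared with the paper's proof. The paper observes directly that the final types $\nu \vdash g$ with $f(\nu) = 0$ are exactly those with $\nu(1) = 0$, and there are $2^{g-1}$ of these since only the last $g-1$ increments are free; equating this with $\sum_{\delta \vdash g} N(\delta)$ via Proposition~\ref{prop_partitioF_finaltype_identity} gives the identity in one stroke. You instead count \emph{all} $2^g$ final types, stratify by $m = g - f(\nu)$ to obtain $\sum_{m=0}^g S(m) = 2^g$, and then telescope against the $g-1$ case. Both arguments rest on the same bijection and the same proposition; the paper's version simply isolates the $f=0$ stratum from the outset and avoids the telescoping step, while your version establishes the slightly stronger intermediate relation $\sum_{m \le g} S(m) = 2^g$ (which is pleasant in its own right) at the cost of one extra line.
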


\begin{proof}
    The claimed identity follows by counting the number of distinct final types $\nu \vdash g$ with $f(\nu) = \nu^g(g) = 0$, or equivalently $\nu(1) = 0$, in two different ways.

    First, the number of final types $\nu \vdash g$ with $\nu(1) = 0$ is $2^{g - 1}$; see Subsection~\ref{subsec:final_types}. On the other hand, for every $\nu \vdash g$ with $f(\nu) = 0$, the associated partition $\delta_\nu \vdash g$ is a partition of~$g$. Therefore, the number of such final types equals $$\sum_{\delta \vdash g} N(\delta),$$ where $N(\delta)$ denotes the number of $\nu \vdash g$ satisfying $\delta_\nu = \delta$. By Proposition \ref{prop_partitioF_finaltype_identity}, the claimed identity follows.
\end{proof}

Moreover, we obtain the following criterion of when $N(\delta) = 1$, i.e., when there is a unique final type $\nu \vdash g$ such that $\delta_{\nu} = \delta$. 

\begin{cor}
\label{cor:unique_nu_for_fixed_delta}
Let $g \geq 1$, $0 \leq f < g$, and let $\delta \vdash g - f$. Then, there is a unique final type $\nu \vdash g$ such that $\delta_{\nu} = \delta$ if and only if $\delta = \underbrace{d\ldots d}_e$ for some $d, e \in \Z_{> 0}$ such that $d\cdot e = g - f$. In that case, $$\nu = (\underbrace{1, 2, \ldots, f}_{f}, \underbrace{f, \ldots, f}_{d}, \underbrace{f + 1, f + 2, f + 3 \ldots, f + (e-1)d}_{(e - 1)d}). $$ 
\end{cor}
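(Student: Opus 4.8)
The plan is to deduce both directions directly from Proposition \ref{prop_partitioF_finaltype_identity}, which gives $N(\delta) = \prod_{i = 1}^{n-1}\binom{\delta_i}{\delta_{i+1}}$ for $\delta = \delta_1\ldots\delta_n$ with $|\delta| = n$. Since $\delta_{i+1} \leq \delta_i$ for every $i$, each factor $\binom{\delta_i}{\delta_{i+1}}$ is a positive integer, so the product equals $1$ precisely when every factor equals $1$. First I would record the elementary fact that for integers $0 \leq b \leq a$ one has $\binom{a}{b} = 1$ if and only if $b = 0$ or $b = a$.

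Because the parts of a partition are strictly positive, the inequality $\delta_{i+1} \geq 1$ rules out the case $b = 0$, so $\binom{\delta_i}{\delta_{i+1}} = 1$ forces $\delta_{i+1} = \delta_i$. Applying this for all $1 \leq i < n$ shows that $N(\delta) = 1$ if and only if $\delta_1 = \delta_2 = \cdots = \delta_n$; writing $d$ for this common value and $e = n$ for the number of parts, this is exactly the condition $\delta = \underbrace{d\ldots d}_e$ with $d \cdot e = \delta_1 + \cdots + \delta_n = g - f$. This establishes the claimed characterization.

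It remains to identify the unique $\nu$ in this case, and here I would substitute $\delta_i = d$ and $n = e$ into the template \eqref{eqn:nu_det_by_delta} and argue that uniqueness forces every $*$ entry. The initial block contributes $1, 2, \ldots, f$. The second block has length $\delta_n = d$ and must both begin and end at the value $f$; since $\nu$ is non-decreasing with steps in $\{0, 1\}$, it is forced to be constant, giving $\underbrace{f, \ldots, f}_d$. Each subsequent block has length $d$ and must raise its terminal value by exactly $\delta_i = d$ over $d$ steps, which forces a strict increase by $1$ at every index; concatenating these $e - 1$ blocks yields the strictly increasing run $f+1, f+2, \ldots, f + (e-1)d$. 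The main bookkeeping step is to confirm that the blocks fit together consistently and that the total length is $f + d + (e-1)d = g$, after which the stated formula for $\nu$ follows. I expect the only mild obstacle to be this block-by-block verification; the heart of the argument is the $\binom{a}{b} = 1$ criterion.
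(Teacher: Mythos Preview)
Your proposal is correct and follows essentially the same approach as the paper: invoke Proposition~\ref{prop_partitioF_finaltype_identity} to reduce $N(\delta)=1$ to each factor $\binom{\delta_i}{\delta_{i+1}}=1$, use $1\leq\delta_{i+1}\leq\delta_i$ to force $\delta_{i+1}=\delta_i$, and then read off the unique $\nu$ from the template~\eqref{eqn:nu_det_by_delta}. The paper is terser in the last step (it simply cites~\eqref{eqn:nu_det_by_delta}), whereas you spell out the block-by-block verification, but the argument is the same.
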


\begin{proof}
Let $\delta = \delta_1\ldots\delta_n \vdash g$ be a partition of $g$ with $|\delta| = n$. By Proposition \ref{prop_partitioF_finaltype_identity}, we find that~$N(\delta)$, the number of final types $\nu \vdash g$ such that $\delta_{\nu} = \delta$, equals $N(\delta) = \prod_{i = 1}^{n - 1}{\delta_i \choose \delta_{i + 1}}$. In particular, $$N(\delta) = 1\quad \text{if and only if}\quad {\delta_i \choose \delta_{i + 1}} = 1 \text{ for all } 1 \leq i < n.$$ Since $1 \leq \delta_{i + 1} \leq \delta_{i} \leq g$, it follows that $$N(\delta) = 1\quad \text{if and only if}\quad \delta_1 = \delta_2 = \ldots = \delta_n = d$$ for some $d \in \Z_{>0}$ and $e = n \in \Z_{>0}$. 
The description of final types $\nu$ such that $\delta_{\nu} = \underbrace{d \ldots d}_e$ follows from \eqref{eqn:nu_det_by_delta}. 
\end{proof}

\subsection{Motivation for studying these relationships} 
\label{subsec:motivation}
Our motivation for studying the final types and partitions in this section comes from the study of the $p$-torsion $A[p]$ of $g$-dimensional principally polarized abelian varieties $A$ over $k = \Fbar_p$, for a prime number~$p>0$. 

A polarized Dieudonn\'e module is a quadruple $(M, F, V, b)$, where $M$ is a $2g$-dimensional $k$-vector space, equipped with $p$- and $p^{-1}$-linear operators $F$ and $V$, and a non-degenerate alternating bilinear form $b$ which satisfy $\ker(F) = \im(V)$, $\ker(V) = \im(F)$, and $b(F(x), y) = b(x, V(y))^p$ for all $x, y \in M$. 

The set of isomorphism classes of $(M, F, V, b)$ with $\dim_k M = 2g$ is in bijection with the set of all final types $\nu \vdash g$; this was observed in \cite{oort}, where the notion of a final type was introduced. Under this bijection, we have $$\nu^i(g) = \dim_k V^{i+1}(M)$$ for all $0 \leq i \leq g$. Furthermore, this also implies that the set of isomorphism classes of $(M, V)$ is in bijection with the set of partitions $\delta_\nu \vdash g - f(\nu)$, as described in Subsection~\ref{subsec:partitions_coming_from_final_types}. The use of partitions to combinatorially describe these pairs was first introduced in~\cite{hassewitt}.

Given a $g$-dimensional principaly polarized abelian varieties $A$ over $k$, the Dieudonn\'e module $\mathbb{D}(A[p])$ is a polarized Dieudonn\'e module with $\dim_k\mathbb{D}(A[p]) = 2g$, equipped with $F, V,$ and $b$ as above. By the discussion above:
\begin{itemize}
    \item the isomorphism class of $\mathbb{D}(A[p])$ is determined by the final type $\nu \vdash g$ of $A$, denoted $\nu = \nu(A)$ and also called the Ekedahl-Oort type of $A$, while
    \item  the action of $V$ on $\mathbb{D}(A[p])$ is encoded by the partition $\delta_\nu \vdash g - f(\nu)$. 
\end{itemize}
For example, Corollary \ref{cor:unique_nu_for_fixed_delta} gives a criterion when the action of $V$ on $\mathbb{D}(A[p])$ uniquely determines the Ekedahl-Oort type of $A$.

\section{Final types and $n$-nacci numbers}
\label{sec:sec3}
In this section, given $g \geq 1$ and $n \geq 1$, we study the numbers $F_n(g)$ defined by: 
\begin{equation}
F_{n}(g):=\#\{\nu \vdash g \text{ final type}: \nu^n(g) = 0\},
\label{eqn:F_n(g)_definition}
\end{equation}
 Note that $F_{1}(g)$ equals the number of final types $\nu \vdash g$ such that $\nu(g) = 0$. Hence, ~$${F_{1}(g) = 1}$$ as the only such final type is $$\nu = (0, 0, \ldots, 0).$$ Furthermore, note that $F_g(g)$ equals the number of final types $\nu \vdash g$ such that $f(\nu) = \nu^g(g) = 0$; this was discussed in Corollary~\ref{cor:nice_identity_g_g}, and we concluded that $$F_g(g) = 2^{g - 1}.$$ 
More generally, we have the following result. 

\begin{lem}
\label{lem:F_n(g)_defining_identity}
For any $g \geq 1$ and $n \geq 1$, 
\begin{equation}
F_n(g) = \sum_{\substack{0 \leq \delta_n \leq \delta_{r - 1} \leq \ldots \delta_1 \leq g \\ \delta_1 + \ldots + \delta_n = g}} {\delta_1 \choose \delta_2}{\delta_2 \choose \delta_3}\ldots {\delta_{n -1} \choose \delta_{n}}
\label{eqn:F_n(g)_defining_identity}
\end{equation}
\end{lem}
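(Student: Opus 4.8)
The plan is to interpret the defining quantity $F_n(g)$ in terms of the partitions $\delta_\nu$ and then invoke Proposition \ref{prop_partitioF_finaltype_identity}. First I would record the basic monotonicity of the orbit of $g$ under $\nu$: since $\nu(0) = 0$ and $\nu(i) \le \nu(i+1) \le \nu(i) + 1$, an easy induction gives $\nu(j) \le j$ for all $j$, so the sequence $g = \nu^0(g) \ge \nu^1(g) \ge \ldots \ge 0$ is non-increasing and non-negative; in particular, once it reaches $0$ it stays $0$, because $\nu(0) = 0$. Consequently $\nu^n(g) = 0$ is equivalent to $\sum_{i=1}^n \delta_\nu(i) = \nu^0(g) - \nu^n(g) = g$, where $\delta_\nu(i) = \nu^{i-1}(g) - \nu^i(g)$ as in \eqref{eqn:definition_delta}.

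Next I would translate this condition into a statement about the partition $\delta_\nu$. Since $\sum_{i=1}^g \delta_\nu(i) = g - f(\nu) \le g$ always holds, the equality $\sum_{i=1}^n \delta_\nu(i) = g$ forces $f(\nu) = 0$ and $\delta_\nu(i) = 0$ for all $i > n$. In other words, $\nu^n(g) = 0$ holds if and only if $\delta_\nu$ is a partition of $g$ with at most $n$ nonzero parts. This lets me reorganize the count $F_n(g)$ according to the value of $\delta_\nu$:
$$F_n(g) = \sum_{\substack{\delta \vdash g \\ |\delta| \le n}} N(\delta) = \sum_{\substack{\delta \vdash g \\ |\delta| \le n}} \prod_{i=1}^{|\delta| - 1}\binom{\delta_i}{\delta_{i+1}},$$
where the second equality is exactly Proposition \ref{prop_partitioF_finaltype_identity} applied with $f = 0$ (a partition $\delta$ of $g$ automatically has $f = g - \sum_i \delta_i = 0$, and all $\nu \vdash g$ with $\delta_\nu = \delta$ and $|\delta| \le n$ satisfy $\nu^n(g) = 0$ by the previous paragraph).

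Finally I would match this sum with the right-hand side of \eqref{eqn:F_n(g)_defining_identity}. The summation index there runs over non-increasing sequences $g \ge \delta_1 \ge \ldots \ge \delta_n \ge 0$ with $\delta_1 + \ldots + \delta_n = g$, i.e. over partitions of $g$ with at most $n$ parts, each padded by trailing zeros to length exactly $n$. This padding is a bijection onto the index set of the previous display, so it remains to check that the two products agree term by term. If $\delta$ has exactly $m \le n$ nonzero parts, then $\delta_{m+1} = \ldots = \delta_n = 0$, and the extra factors are $\binom{\delta_m}{\delta_{m+1}} = \binom{\delta_m}{0} = 1$ together with $\binom{0}{0} = 1$ at each subsequent step; hence $\prod_{i=1}^{n-1}\binom{\delta_i}{\delta_{i+1}} = \prod_{i=1}^{m-1}\binom{\delta_i}{\delta_{i+1}} = N(\delta)$. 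Substituting into the display yields the asserted identity.

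The only genuinely delicate point is the bookkeeping with trailing zeros: one must confirm that padding a partition to the fixed length $n$ neither alters the binomial product (which rests on $\binom{a}{0} = \binom{0}{0} = 1$) nor introduces double-counting, and that partitions of $g$ with more than $n$ parts are correctly excluded from both sides. Everything else follows directly from the monotonicity of the orbit $\nu^i(g)$ and from Proposition \ref{prop_partitioF_finaltype_identity}.
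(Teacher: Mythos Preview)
Your proposal is correct and follows essentially the same approach as the paper: reinterpret the condition $\nu^n(g)=0$ as $\delta_\nu\vdash g$ with $|\delta_\nu|\le n$, then invoke Proposition~\ref{prop_partitioF_finaltype_identity} and sum. The paper's proof is a terse three lines that asserts the equivalence and the summation; your version spells out the monotonicity of $\nu^i(g)$, the fact that $f(\nu)=0$ is forced, and the harmless effect of padding by trailing zeros, all of which the paper leaves implicit.
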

\begin{proof}
Given a final type $\nu \vdash g$, let $\delta_{\nu}$ be a partition defined by $\nu$ as in Subsection \ref{subsec:partitions_coming_from_final_types}. 
By~\eqref{eqn:definition_delta},  $$\nu^n(g) = 0\quad \text{is equivalent to}\quad|\delta_{\nu}| \leq n.$$ The result follows from Proposition \ref{prop_partitioF_finaltype_identity} by summing over all partitions $\delta$ of length $|\delta| = i$ for $1 \leq i \leq n$. 
\end{proof}

Note that, for $n \geq 1$, the right-hand side of \eqref{eqn:F_n(g)_defining_identity} is also defined in the case $g < 1$ and equals  $\begin{cases}
0& \text{if}\; g < 0 \\
1& \text{if}\; g = 0
\end{cases}$. Therefore, let us extend the definition given above of $F_n(g)$ by setting 
\begin{equation}
F_n(g) = \begin{cases}
0& \text{if}\; g < 0 \\
1& \text{if}\; g = 0 \\
F_n(g) \text{ as in }\eqref{eqn:F_n(g)_definition}& \text{if}\; g >0
\end{cases}.
\label{eqn:F_n(g)_definitioF_modified}
\end{equation}

\subsection{On $n$-nacci numbers}
\label{subsection::nbonacci}
Given an integer $n \geq 1$, let $(\Tilde{F}_n(g))_{g \geq 1}$ denote the $n$-nacci sequence, that is, the sequence defined by 
$$\Tilde{F}_n(g) = \Tilde{F}_n(g - 1) + \Tilde{F}_n(g - 2) + \ldots + \Tilde{F}_n(g - n)$$ for $g > n$, 
with initial conditions $$\Tilde{F}_n(1) = \Tilde{F}_n(2) = \ldots = \Tilde{F}_n(n - 1) = 0\quad \text{and}\quad \Tilde{F}_n(n) = 1.$$ 

The case $n = 1$ is trivial, since then $\Tilde{F}_1(g) = 1$ for all $g \geq 1$. 
For $n = 2$, $\Tilde{F}_2$ is the Fibonacci sequence \cite[A000045]{oeis}, which motivates the name. 
Similarly, for $n = 3$, $\Tilde{F}_3$~is the sequence of tribonacci numbers \cite[A000073]{oeis}; for $n = 4$, $\Tilde{F}_4$ is the sequence of tetranacci numbers \cite[A000078]{oeis}; and so on.

\subsection{Relations between $F_n(g)$ and $n$-nacci numbers}

Lastly, we relate the numbers $F_n(g)$ and $\Tilde{F}_n(g)$ for any $n \geq 2$. For $n = 2$, this is a classical result already noted by Lucas in \cite[p.~7]{luc}. Although we believe that the statement below for $n \geq 3$ is known, at least in some cases, we were unable to find a reference, and therefore we present and prove it~here.

\begin{thm} 
Let $g \geq 1$, $n \geq 1$, and let $F_n(g)$ be as in \eqref{eqn:F_n(g)_definitioF_modified}. Then $$F_n(g) = F_n(g - 1) + F_n(g - 2) + \ldots + F_n(g - n).$$
In particular, $F_n(g) = \Tilde{F}_n(g + n)$, where $(\Tilde{F}_n(g))_{g \geq 1}$ is the sequence of $n$-nacci numbers, and Theorem from Section \ref{sec:intro} holds. 
\end{thm}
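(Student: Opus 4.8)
The plan is to establish the recurrence $F_n(g) = F_n(g-1) + \ldots + F_n(g-n)$ directly from the combinatorial definition \eqref{eqn:F_n(g)_definition}, since the identification with $n$-nacci numbers then follows by an easy induction on matching initial conditions. Recall that $F_n(g)$ counts final types $\nu \vdash g$ with $\nu^n(g) = 0$. The key structural fact, already noted in Lemma \ref{lem:F_n(g)_defining_identity}, is that the condition $\nu^n(g) = 0$ is equivalent to $|\delta_\nu| \leq n$; so $F_n(g)$ counts final types whose associated partition has at most $n$ parts. Rather than argue on partitions, I would argue directly on the sequences $\nu$ themselves, using the defining property that each step satisfies $\nu(i) = \nu(i-1)$ or $\nu(i) = \nu(i-1) + 1$.

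The heart of the proof is a bijection realizing the recurrence. First I would observe that a final type $\nu \vdash g$ with $f(\nu) = \nu^g(g) = 0$ is the same data as a choice, for each $1 \leq i \leq g$, of whether $\nu(i) = \nu(i-1)$ (a ``flat'' step) or $\nu(i) = \nu(i-1)+1$ (an ``up'' step), subject to the constraint that the resulting values never exceed what is allowed; the condition $f(\nu)=0$ forces the first step to be flat. To get the recurrence, I would condition on the initial behaviour of $\nu$: specifically, look at how many leading flat steps occur before the first ``up'' step, i.e. the smallest index $j$ with $\nu(j) \neq 0$. If the first up-step occurs at position $j$ with $1 \leq j \leq n$, I expect to exhibit a bijection between the set counted by $F_n(g)$ restricted to this case and the set counted by $F_n(g-j)$, by truncating the initial block and reindexing. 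Summing over $j = 1, \ldots, n$ would yield $F_n(g) = \sum_{j=1}^{n} F_n(g-j)$, with the boundary conventions of \eqref{eqn:F_n(g)_definitioF_modified} absorbing the small cases.

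The main obstacle, and the step I would treat most carefully, is verifying that the truncation-and-reindex map genuinely lands in the set counted by $F_n(g-j)$ and respects the $n$-part constraint, i.e. that the iterate condition $\nu^n(g) = 0$ translates correctly under removing the first block. Concretely, the delicate point is tracking how $\delta_\nu(1) = \nu^0(g) - \nu^1(g) = g - \nu(g)$ and the subsequent parts behave: removing the initial block of length $j$ should decrease $g$ by $j$ while shifting the partition data so that the ``at most $n$ parts'' condition is preserved exactly when $j \leq n$. I would phrase this in terms of the partition $\delta_\nu$: writing $\delta_\nu = \delta_1 \ldots \delta_m$ with $m \leq n$, the largest part $\delta_1$ plays the role of the length of the final ``run'', and decrementing $g$ corresponds to adjusting parts near the front, which is where the $n$-fold split originates. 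Once the bijective correspondence is pinned down and the small-$g$ base cases are checked against \eqref{eqn:F_n(g)_definitioF_modified}, the recurrence holds.

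Finally, to conclude $F_n(g) = \tilde F_n(g+n)$, I would note that both sequences satisfy the same order-$n$ linear recurrence, so it suffices to match $n$ consecutive initial values. Using the extended definition \eqref{eqn:F_n(g)_definitioF_modified}, one checks $F_n(0) = 1$ and $F_n(g) = 0$ for $-n < g < 0$ (equivalently $F_n(1-n) = \ldots = F_n(-1) = 0$), which align precisely with $\tilde F_n(n) = 1$ and $\tilde F_n(1) = \ldots = \tilde F_n(n-1) = 0$ after the index shift $g \mapsto g+n$. Since the stated theorem from the introduction is exactly $\tilde F_n(g+n) = F_n(g)$ expressed via the formula of Lemma \ref{lem:F_n(g)_defining_identity}, this completes the argument.
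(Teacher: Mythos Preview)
Your bijective plan has a genuine gap: the parameter $j$ --- the position of the first up-step, equivalently the smallest index with $\nu(j) \neq 0$ --- is \emph{not} bounded by $n$, so conditioning on $j \in \{1,\ldots,n\}$ does not partition the set counted by $F_n(g)$. First, since $f(\nu) = 0$ forces $\nu(1) = 0$, one always has $j \geq 2$, so the case $j = 1$ is already empty. More seriously, $j$ can be arbitrarily large relative to $n$: with $n = 2$ and $g = 5$, the final type $\nu = (0,0,0,0,1)$ has $\delta_\nu = 41$, hence $|\delta_\nu| = 2 \leq n$ and $\nu$ is counted by $F_2(5)$, yet its first up-step sits at position $j = 5$; and the all-zeros type $\nu = (0,\ldots,0)$ has no up-step at all. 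In general \eqref{eqn:nu_det_by_delta} shows only that the number of leading zeros is at least $\delta_m$ (where $m = |\delta_\nu|$), and nothing ties this quantity to $n$. So the decomposition $F_n(g) = \sum_{j=1}^n \#\{\nu : \text{first up-step at }j\}$ is simply false, and the obstacle you yourself flag --- checking that the $n$-part constraint survives truncation of the initial block --- is precisely where the argument collapses rather than a routine verification.

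For comparison, the paper does not attempt a bijection. It works algebraically from the formula of Lemma~\ref{lem:F_n(g)_defining_identity}, writing $F_n(g) = \sum_{s=1}^n G_s(g)$ with $G_s$ the contribution from partitions of length exactly $s$, and then iterating Pascal's rule $\binom{a}{b} = \binom{a-1}{b} + \binom{a-1}{b-1}$ across the successive factors $\binom{\delta_1}{\delta_2}, \binom{\delta_2}{\delta_3}, \ldots$ to derive $G_s(g) = \sum_{i=1}^{s-1} G_s(g-i) + F_s(g-s)$; summing over $s$ and rearranging gives the $n$-term recurrence. If you want a bijective proof, you would need a parameter that is \emph{genuinely} confined to $\{1,\ldots,n\}$ --- something read off from the block structure in \eqref{eqn:nu_det_by_delta} or from the shape of $\delta_\nu$, not from the raw position of the first nonzero entry of $\nu$.
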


\begin{proof}
The proof of this theorem relies on elementary combinatorial arguments, in particular on the basic identity for binomial coefficients, Pascal’s rule ${a \choose b} = {a - 1 \choose b} + {a \choose b - 1}$ for any~${a, b \in \Z_{>0}}$. 
By Lemma \ref{lem:F_n(g)_defining_identity}, 
\begin{equation}
F_n(g) = \sum_{\substack{0 \leq \delta_n \leq \ldots \leq  \delta_1 \leq g \\ \delta_1 + \ldots + \delta_n = g}}  {\delta_1 \choose \delta_2}{\delta_2 \choose \delta_3}\ldots {\delta_{n -1} \choose \delta_{n}} = G_1(g) + \ldots + G_n(g),    \label{eqn:F_n(g)}
\end{equation}
where $$G_s(g) =  \sum_{\substack{1 \leq \delta_s \leq \ldots \leq  \delta_1 \leq g \\ \delta_1 + \ldots + \delta_s = g}}  {\delta_1 \choose \delta_2}{\delta_2 \choose \delta_3}\ldots {\delta_{s -1} \choose \delta_{s}}.$$

Applying Pascal's rule, we get the following for $s \geq 2$: 
\begin{align*}
    G_s(g) &= \sum_{\substack{1 \leq \delta_s \leq \delta_{s - 1} \leq \ldots \delta_1 \leq g \\ \delta_1 + \ldots + \delta_s = g}} \left ( {\delta_1 - 1 \choose \delta_2} + {\delta_1 - 1 \choose \delta_2 - 1} \right ){\delta_2 \choose \delta_3}\ldots {\delta_{s -1} \choose \delta_{s}} \\ 
    &= \sum_{\substack{1 \leq \delta_s \leq \delta_{s - 1} \leq \ldots \delta_1' \leq g - 1 \\ \delta_1' + \ldots + \delta_s = g - 1 \\ \delta_1' = \delta_1 - 1}} {\delta_1' \choose \delta_2} {\delta_2 \choose \delta_3}\ldots {\delta_{s -1} \choose \delta_{s}} + 
   \sum_{\substack{1 \leq \delta_s \leq \delta_{s - 1} \leq \ldots \delta_1' \leq g - 1 \\ \delta_1' + \ldots + \delta_s = g - 1 \\ \delta_1' = \delta_1 - 1}} {\delta_1' \choose \delta_2 - 1} {\delta_2 \choose \delta_3}\ldots {\delta_{s -1} \choose \delta_{s}} \\
    &= G_s(g - 1) + 
   \sum_{\substack{1 \leq \delta_s \leq \delta_{s - 1} \leq \ldots \delta_1' \leq g - 2 \\ \delta_1' + \ldots + \delta_s = g - 2 \\ \delta_i' = \delta_i - 1, 1 \leq i \leq 2}} {\delta_1' \choose \delta_2'}  {\delta_2' + 1 \choose \delta_3} \ldots {\delta_{s -1} \choose \delta_{s}}\\
    &= G_s(g - 1) + 
   \sum_{\substack{1 \leq \delta_s \leq \delta_{s - 1} \leq \ldots \delta_1' \leq g - 2 \\ \delta_1' + \ldots + \delta_s = g - 2 \\ \delta_i' = \delta_i - 1, 1 \leq i \leq 2}} {\delta_1' \choose \delta_2'} \left ( {\delta_2' \choose \delta_3} + {\delta_2' \choose \delta_3 - 1} \right )\ldots {\delta_{s -1} \choose \delta_{s}}\\
   &= G_s(g - 1) + 
   \sum_{\substack{1 \leq \delta_s \leq \ldots \leq \delta_1' \leq g - 2 \\ \delta_1' + \ldots + \delta_s = g - 2 \\ \delta_i' = \delta_i - 1, 1\leq i \leq 2}} {\delta_1' \choose \delta_2'} {\delta_2' \choose \delta_3} \ldots {\delta_{s -1} \choose \delta_{s}} +    \sum_{\substack{1 \leq \delta_s \leq \ldots \leq \delta_1' \leq g - 2 \\ \delta_1' + \ldots + \delta_s = g - 2 \\ \delta_i' = \delta_i - 1, 1\leq i \leq 2}} {\delta_1' \choose \delta_2'} {\delta_2' \choose \delta_3 - 1} \ldots {\delta_{s -1} \choose \delta_{s}} \\
   &= G_s(g - 1) + G_s(g - 2) + \sum_{\substack{1 \leq \delta_s \leq \ldots \leq \delta_1' \leq g - 3 \\ \delta_1' + \ldots + \delta_s = g - 3 \\ \delta_i' = \delta_i - 1, 1\leq i \leq 3}} {\delta_1' \choose \delta_2'} {\delta_2' \choose \delta'_3} {\delta_3' + 1 \choose \delta_4} \ldots {\delta_{s -1} \choose \delta_{s}}\\
   &= G_s(g - 1) + \ldots + G_s(g - s + 2) + \sum_{\substack{1 \leq \delta_s \leq \ldots \leq \delta_1' \leq g - s + 1 \\ \delta_1' + \ldots + \delta_s = g - s +1 \\ \delta_i' = \delta_i - 1, 1\leq i \leq s - 1}} {\delta_1' \choose \delta_2'} \ldots {\delta_{s -2}' \choose \delta_{s - 1}'} {\delta_{s -1}' + 1 \choose \delta_{s}}.
\end{align*}
Finally, by writing ${\delta_{s -1}' + 1 \choose \delta_{s}} = {\delta_{s -1}' \choose \delta_{s}} + {\delta_{s -1}' \choose \delta_{s} - 1}$ and $\delta'_s = \delta_s-1$, we obtain 
\begin{align*}
G_s(g) &= G_s(g - 1) + \ldots + G_s(g - s + 1) + \sum_{\substack{0 \leq \delta_s' \leq \ldots \leq \delta_1' \leq g - s \\ \delta_1' + \ldots + \delta_s' = g - s}} {\delta_1' \choose \delta_2'} \ldots {\delta_{s -2}' \choose \delta_{s - 1}'} {\delta_{s -1}' \choose \delta_{s}'}  \\
&= G_s(g - 1) + \ldots + G_s(g - s + 1) + F_s(g - s).
\end{align*}
Therefore, for any $g \geq 1$ and $s \geq 2$, it follows that: \begin{equation}
    G_s(g) = \sum_{i = 1}^{s - 1}G_s(g - i) + \sum_{i = 1}^sG_i(g - s),
\label{eqn:M_s(g)}
\end{equation}
where we used $F_s(g - s) = \sum_{i = 1}^sG_i(g - s)$ as observed in \eqref{eqn:F_n(g)}. 
Moreover, note that \eqref{eqn:M_s(g)} also holds for $g \geq 1$ and $s = 1$ since $G_1(g) = G_1(g - 1) = 1$.

Using \eqref{eqn:M_s(g)} for $g \geq 1$ and $s \geq 1$, we obtain that 
\begin{equation*}
F_n(g) = \sum_{s = 1}^n\left ( \sum_{i = 1}^{s - 1}G_s(g - i) + \sum_{i = 1}^sG_i(g - s) \right)
\end{equation*}
Note that on the right-hand side of this equality, every element $G_i(j)$ for $1 \leq i \leq n$ and $g - n \leq j \leq g - 1$ and appears exactly once. Therefore, by rearranging the sum, we obtain that
$$F_n(g) = \sum_{j = g - n}^{g - 1} \sum_{i = 1}^{n}G_i(j) = \sum_{j = g - n}^{g - 1} F_n(j) = F_n(g - 1) + F_n(g - 2) + \ldots + F_n(g - n).$$
This proves the desired recursive relations. Lastly, this identity and the initial conditions in \eqref{eqn:F_n(g)_definitioF_modified} for $F_n(g)$ with $g \leq 0$ imply that $$(F_n(g))_{g \geq 1} = (\Tilde{F}_n(g + n))_{g \geq 1},$$ where $(\Tilde{F}_n(g))_{g \geq 1}$ is as in Subsection~\ref{subsection::nbonacci}, 
i.e., that $(F_n(g))_{g \geq 1}$ is the $n$-nacci sequence shifted by~$n$. 
\end{proof}

\section{Comparing three partial orders of partitions}
\label{sec:order}

We recall some basic notions from Section \ref{sec:sec2}.
First, recall that $\delta = \delta_1\delta_2\ldots\delta_n \vdash g$ denotes a partition of $g$ into $|\delta| = n$ parts, i.e., that $\delta_1 \geq \delta_2 \geq \ldots \geq \delta_n > 0$ and $\delta_1 + \ldots + \delta_n = g$. Furthermore, given a final type $\nu \vdash g$, we denote by $\delta_{\nu}$ the partition defined by \eqref{eqn:definition_delta}. Lastly, given a partition $\delta \vdash g$, any final type $\nu \vdash g$ such that $\delta = \delta_{\nu}$ satisfies \eqref{eqn:nu_det_by_delta}.

In the preceding section, we studied the relationship between final types and partitions, which led us to some interesting combinatorial consequences. 
As we explained in Subsection~\ref{subsec:motivation}, our motivation for studying these objects comes from the study of the $p$-torsion of abelian varieties in characteristic $p$. 
The same motivation leads us to consider the lexicographical ordering on the set of all final types $\nu \vdash g$ for a fixed $g \geq 2$, when we say that $$\nu = (\nu(1), \ldots, \nu(g)) \leq \nu' = (\nu'(1), \ldots, \nu'(g)) \text{ if and only if } \nu(i) \leq \nu'(i) \text{ for all }1 \leq i \leq g.$$
Given $g \geq 2$, this induces a partial ordering on the set of all partitions $\delta \vdash g$ defined by $$\delta \leq_{ft} \delta' \text{ if and only if for all } \nu\vdash g \text{ with } \delta = \delta_{\nu} \text{ there is } \nu' \vdash g \text{ such that } \delta' = \delta'_{\nu'} \text{ and } \nu \leq \nu'.$$
(The notation $\leq_{ft}$ indicates that this relation is induced by the partial order on final types.) Our wish is to compare this order with two natural and well-known orders on the set of all partitions of a fixed integer. 

A natural partial ordering on the set of all partitions $\delta \vdash g$ is given by $$\delta \leq_{pp} \delta' \text{ if and only if } \delta \text{ is obtained by grouping some parts of } \delta', $$
i.e., if $\delta = \delta_1\ldots\delta_m$, $\delta' = \delta'_1\ldots\delta'_n$, where, for each $i$, $\delta_i = \sum \delta'_{j_i}$ for some $j_i$ and the multisets $$\{\delta_1', \ldots, \delta_n'\}\quad \text{ and } \quad \{\delta_{j_i}': 1 \leq i \leq m\}$$ coincide. (The notation $\leq_{pp}$ indicates that $\delta'$ is a "partial partition" of $\delta$.) We illustrate some examples when $\delta \leq_{pp} \delta'$ below. 

\begin{exmp}
\label{exmp:345}
Below is the description of the ordering $\leq_{pp}$ in the cases $g = 3, 4, 5$.

 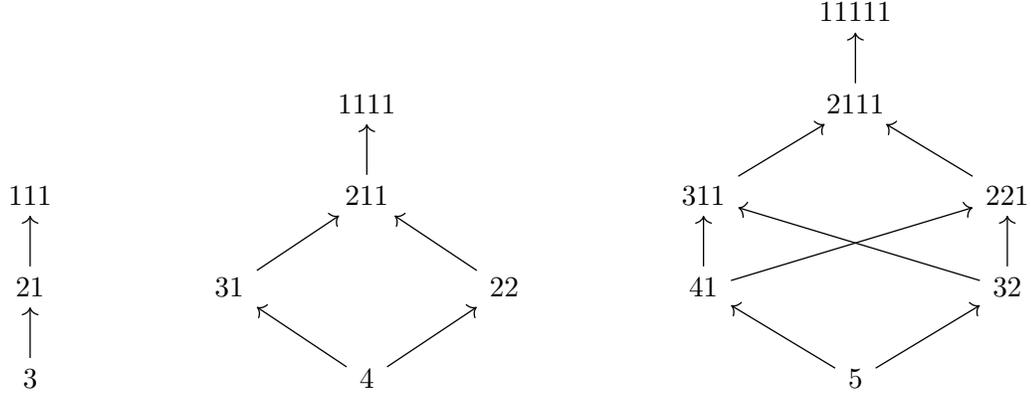
\begin{figure}[H]
 \begin{center}
\begin{tikzcd}
	&&&&&&& 11111 \\
	&&& 1111 &&&& 2111 \\
	111 &&& 211 &&& 311 && 221 \\
	21 && 31 && 22 && 41 && 32 \\
	3 &&& 4 &&&& 5
	\arrow[from=5-1, to=4-1]
	\arrow[from=4-1, to=3-1]
	\arrow[from=5-4, to=4-5]
    \arrow[from=4-3, to=3-4]
	\arrow[from=3-4, to=2-4]
	\arrow[from=4-5, to=3-4]
	\arrow[from=5-4, to=4-3]
	\arrow[from=5-8, to=4-7]
	\arrow[from=5-8, to=4-9]
	\arrow[from=4-7, to=3-7]
	\arrow[from=4-9, to=3-9]
	\arrow[from=4-9, to=3-7]
	\arrow[from=4-7, to=3-9]
	\arrow[from=3-7, to=2-8]
	\arrow[from=3-9, to=2-8]
	\arrow[from=2-8, to=1-8]
\end{tikzcd}
\end{center}
     
     \caption{Partitions $\delta \vdash g$, for $g \in \{3, 4, 5\}$; $\delta \rightarrow \delta'$ if ${\delta} \leq_{pp} {\delta'}$.}
     \label{fig:partitions_genus345}
 \end{figure} 

\noindent The transitivity of  $\leq_{pp}$ is reflected in this example by observing, for instance, that $3 \leq_{pp} 111$, $22 \leq_{pp} 1111$, and $32\leq_{pp} 2111$. 
\end{exmp}

\begin{rem}
Following the definition, one easily verifies that for partitions $\delta \vdash g$ and $\delta' \vdash g$ with $g \in \{3, 4, 5\}$, the diagrams in Figure \ref{fig:partitions_genus345} satisfy
$\delta \rightarrow \delta'$ whenever ${\delta} \leq_{ft} {\delta'}$. (Alternatively, this follows from Corollary \ref{cor:lastdeltaisone} below.)
This raises the question of how the two orders $\leq_{ft}$ and~$\leq_{pp}$ compare.     
\end{rem}

There is another natural order on the set of all partitions $\delta \vdash g$ for a fixed $g\geq 2$, known as the \emph{dominance order}. In fact, we consider two of its variants, right-dominance order $\leq_{do}$ and left-dominance order $\leq_{do'}$, defined by
$$\delta = \delta_1\ldots\delta_m \leq_{do} \delta' = \delta'_1\ldots\delta'_n \text{ if and only if } \sum_{i = |\delta'| - k}^{|\delta'|} \delta_{i}' \leq \sum_{i = |\delta| - k}^{|\delta|} \delta_{i} \text{ for all } k \geq 0, $$
where $\delta_{i} = \delta'_{i} = 0$ for $i \leq 0$, and
$$\delta = \delta_1\ldots\delta_m \leq_{do'} \delta' = \delta'_1\ldots\delta'_n \text{ if and only if }\sum_{i = 1}^k \delta_{i} \leq \sum_{i = 1}^k \delta_{i}' \text{ for all } k \geq 0.$$
Note that for $\delta, \delta' \vdash g$ such that $|\delta| = |\delta'|$ it holds that $\delta \leq_{do} \delta'$ if and only if $\delta \leq_{do'} \delta'$. 
\begin{rem}
\label{rem:do}
For example, for two partitions $31\vdash 4, 22 \vdash 4$, we see that $22 \leq_{do} 31$ and $22 \leq_{do'} 31$. On the other hand, note that $22 \not\leq_{*} 31$ and $31 \not\leq_{*} 22$ for $* \in \{\text{ft}, \text{pp}\}$. Also, note that $4 \leq_{do} 31$ while $31 \leq_{do'} 4$ and~$4 \not \leq_{do'} 31$.
This shows that the two orders dominance orders $\leq_{do}$ and $\leq_{do'}$ differ from $\leq_{ft}$ and $\leq_{pp}$ in general, and that $\leq_{do}$ and $\leq_{do'}$ are themselves distinct. Lastly, as illustrated in Figure \ref{fig:partitions_genus345}, note that either of the relations $\delta \leq_{pp} \delta'$ or~$\delta \leq_{ft} \delta'$ implies~$\delta \leq_{do} \delta'$ when $\delta, \delta' \vdash g$ with $g \in \{3, 4, 5\}$. 
\end{rem}

\subsection{Comparing $\leq_{ft}$, $\leq_{pp}$, and $\leq_{do}$}

Let $g \geq 2$ be a fixed integer throughout this section. Motivated by the preceding remark, in this subsection, we focus on the relationships between the orders $\leq_{ft}$, $\leq_{pp}$, and $\leq_{do}$. 
We begin with the first lemma, which will be used throughout this section, as it allows us in some cases to reduce the comparison of partitions of $g$ to partitions of $g' < g$.

\begin{lem}
Let $\delta = \delta_{1}\ldots\delta_m \vdash g$ and $\delta' = \delta_{1}'\ldots\delta_n' \vdash g$ be two partitions of $g$, and let $h \geq \delta_1$ and $f \leq \min(\delta_m, \delta_n')$ be positive integers. 
Then ${\delta} \leq_{ft} {\delta}'$ if and only if ${\Tilde{\delta}} \leq_{ft} {\Tilde{\delta'}}$, where $\Tilde{\delta} = h\delta_{1}\ldots\delta_m f$ and $\Tilde{\delta'} = h\delta_{1}'\ldots\delta_n'f$. 
\label{lemma:extension_delta}
\end{lem}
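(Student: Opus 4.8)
The plan is to replace the quantifier-heavy definition of $\leq_{ft}$ by a comparison of a single canonically chosen final type. For a partition $\rho$ of an integer $N$, among all final types $\nu \vdash N$ with $\delta_\nu = \rho$ (those described by \eqref{eqn:nu_det_by_delta} with $f(\nu)=0$) there is a pointwise-largest one, which I will call $\nu_{\max}(\rho)$: within each block of \eqref{eqn:nu_det_by_delta} the number of ascent steps is prescribed while their positions are free, and placing them as far to the left as possible maximizes every partial sum inside the block; since the blocks are independent, the resulting final type dominates every $\nu$ with $\delta_\nu=\rho$ pointwise. First I would record the resulting reformulation: for $\delta,\delta'\vdash g$,
$$\delta \leq_{ft} \delta' \iff \nu_{\max}(\delta) \leq \nu_{\max}(\delta') \text{ pointwise on } \{0,\ldots,g\}.$$
The forward implication comes from applying the definition of $\leq_{ft}$ to $\nu=\nu_{\max}(\delta)$ and using that the witness $\nu'$ satisfies $\nu' \leq \nu_{\max}(\delta')$; the backward implication holds because any $\nu \leq \nu_{\max}(\delta) \leq \nu_{\max}(\delta')$ admits the witness $\nu'=\nu_{\max}(\delta')$.

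Next I would compute $\nu_{\max}(\tilde\delta)$ explicitly from $\nu_{\max}(\delta)$ using \eqref{eqn:nu_det_by_delta} for the parts $h,\delta_1,\ldots,\delta_m,f$. Writing $G=g+h+f$, the function $\nu_{\max}(\tilde\delta)$ splits into four regions: it vanishes on the initial $f$-block $[0,f]$; on the smallest block it climbs at unit rate, so on $[f,2f]$ it equals $i-f$ \emph{independently of} $\delta$ (here I use $f\leq\delta_m$, so this block does not flatten before position $2f$); on the core it satisfies the shift identity $\nu_{\max}(\tilde\delta)(f+t)=f+\nu_{\max}(\delta)(t)$ for all $t\in[f,g]$; and on the terminal $h$-block $[f+g,G]$ it equals $(f+g-\delta_1)+\min(i-f-g,\delta_1)$, reaching $f+g$ at $i=G$ because $h\geq\delta_1$. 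The same description holds for $\nu_{\max}(\tilde\delta')$ with $\delta_n'$ and $\delta_1'$ in place of $\delta_m$ and $\delta_1$.

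Then I would compare the two maximal final types region by region. On $[0,2f]$ they coincide (both $0$, then $i-f$), using $f\leq\min(\delta_m,\delta_n')$, so the inequality is automatic there. On the core, the shift identity gives $\nu_{\max}(\tilde\delta)\leq\nu_{\max}(\tilde\delta')$ on $[2f,f+g]$ if and only if $\nu_{\max}(\delta)\leq\nu_{\max}(\delta')$ on $[f,g]$. On the $h$-block, a short comparison of the two $\min$-expressions shows the inequality there is equivalent to $\delta_1\geq\delta_1'$, which is exactly the $t=g$ instance of the core comparison and therefore adds nothing new. Hence $\nu_{\max}(\tilde\delta)\leq\nu_{\max}(\tilde\delta')$ on $\{0,\ldots,G\}$ if and only if $\nu_{\max}(\delta)\leq\nu_{\max}(\delta')$ on $[f,g]$. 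Finally, since $f\leq\delta_m$ and $f\leq\delta_n'$, both $\nu_{\max}(\delta)$ and $\nu_{\max}(\delta')$ vanish on $[0,f]$, so comparison on $[f,g]$ is identical to comparison on all of $[0,g]$; combining this with the reformulation of both $\leq_{ft}$ relations yields $\tilde\delta\leq_{ft}\tilde\delta'\iff\delta\leq_{ft}\delta'$.

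The hard part is the misalignment created by the differing smallest parts $\delta_m$ and $\delta_n'$: the smallest blocks of $\tilde\delta$ and $\tilde\delta'$ have different lengths, so the two maximal final types are not mere shifts of each other near the start, and the naive shift identity fails on an initial stretch. The resolution — and the reason the hypothesis $f\leq\min(\delta_m,\delta_n')$ is exactly what is needed — is that on $[0,2f]$ the maximal final type does not depend on the partition at all (it is forced to be $0$ and then to rise at unit rate), so both sides agree there and the problematic region contributes no constraint; symmetrically, $h\geq\delta_1$ (and $h\geq\delta_1'$, which is anyway required for $\tilde\delta'$ to be a partition) guarantees the terminal $h$-block imposes nothing beyond $\delta_1\geq\delta_1'$.
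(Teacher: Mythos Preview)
Your proof is correct. The reformulation $\delta \leq_{ft} \delta' \iff \nu_{\max}(\delta) \leq \nu_{\max}(\delta')$ is a genuine simplification: it turns the ``$\forall\nu\,\exists\nu'$'' definition into a single pointwise comparison, and the region-by-region analysis of $\nu_{\max}(\tilde\delta)$ that follows is clean and uses the hypotheses $f\leq\min(\delta_m,\delta_n')$ and $h\geq\delta_1$ exactly where they are needed.

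This is a different route from the paper's. The paper attempts to set up a direct correspondence between final types $\nu$ for $\delta$ and final types $\tilde\nu$ for $\tilde\delta$ via the shift formula $\tilde\nu(f+i)=f+\nu(i)$ for $1\leq i\leq g$, together with $\tilde\nu\equiv 0$ on $[0,f]$; but as written that formula does not produce a valid final type (it forces a jump from $0$ to $f$ at position $f+1$), so the paper's argument is at best a sketch whose details are left to the reader. Your approach sidesteps this difficulty entirely: by passing to the unique maximal representative $\nu_{\max}$, you never need a bijection between the two families of final types, only an explicit formula for one distinguished member of each. The cost is the extra observation that $\nu_{\max}$ exists and characterises $\leq_{ft}$; the payoff is that the boundary mismatch near $[f,2f]$ (where the naive shift fails) is handled transparently, since on that interval $\nu_{\max}(\tilde\delta)$ is visibly independent of $\delta$.
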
 

\begin{proof}
Let $\nu \vdash g$ (resp. $\Tilde{\nu} \vdash g$) be a final type such that $\delta = \delta_{\nu}$ (resp. $\delta' = \delta'_{\nu'}$). By \eqref{eqn:nu_det_by_delta}, $$\Tilde{\nu}(i) = 0, 1\leq i \leq f, \quad  \Tilde{\nu}(f + i) = f + \nu(i), 1\leq i \leq g,  \text{ and}\quad \Tilde{\nu}(f + g + h) = f + g.$$ A similar description holds for $\Tilde{\nu}'$. It follows from the definition that ${\delta} \leq_{ft} {\delta}'$ if and only if~${\Tilde{\delta}} \leq_{ft} {\Tilde{\delta'}}$.
\end{proof}

Having established the preceding lemma, which allows us to compare two partitions by induction, we now present a first comparison of $\leq_{ft}$ and $\leq_{pp}$ in the case where one partition is obtained from the other by grouping only two of its parts. Specifically, we have the following result.

\begin{lem}
Let $\delta = \delta_1\ldots\delta_{n + 1} \vdash g$ be a partition. Assume that $\delta' = \delta_1'\ldots\delta_n' \vdash g$ is a partition obtained by grouping two parts of $\delta$, i.e., such that multisets $\{\delta_1', \ldots, \delta_n'\}$ and $\{\delta_i + \delta_j\} \cup \{\delta_k: k \neq i, j\}$ coincide for some $i \neq j$. Then ${\delta'}\leq_{ft} {\delta}$.
\label{lem:grouping}
\end{lem}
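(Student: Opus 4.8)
The plan is to reduce the relation $\delta' \leq_{ft} \delta$ to a single pointwise inequality between the largest final types of the two partitions. For a partition $\rho \vdash g$, every final type $\nu$ with $\delta_\nu = \rho$ has the shape \eqref{eqn:nu_det_by_delta}: the positions $\{1,\dots,g\}$ split into consecutive blocks (one per part, ordered from the smallest part to the largest), the value of $\nu$ at each block's right endpoint is prescribed, and the only freedom is where inside each block the unit steps occur. Placing all steps of every block as early as possible produces a final type $\mu_{\max}(\rho)$ that dominates, pointwise, every $\nu$ with $\delta_\nu = \rho$. Since the definition of $\leq_{ft}$ lets me choose, for each $\nu$ with $\delta_\nu = \delta'$, any target $\mu$ with $\delta_\mu = \delta$, I would take $\mu = \mu_{\max}(\delta)$ uniformly; then $\delta' \leq_{ft} \delta$ follows once I show
$$\mu_{\max}(\delta')(j) \leq \mu_{\max}(\delta)(j) \quad \text{for all } 0 \leq j \leq g,$$
because $\mu_{\max}(\delta')$ is itself the largest final type of $\delta'$.

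To make this inequality tractable I would record the complementary quantity $D_\rho(j) := j - \mu_{\max}(\rho)(j)$. A short computation from \eqref{eqn:nu_det_by_delta} shows that, writing the parts of $\rho$ increasingly as $a_1 \leq \dots \leq a_m$ and $p_l = a_1 + \dots + a_l$, one has $D_\rho(j) = \max_{l}\min\!\big(a_l,\,(j - p_{l-1})^{+}\big)$; equivalently, $D_\rho(j)$ is the largest number of positions of a single block that lie in $\{1,\dots,j\}$. The target inequality then reads $D_{\delta'}(j) \geq D_\delta(j)$, and in this form it expresses the intuitive fact that fusing two parts into the larger part $\delta_i + \delta_j$ can only enlarge the maximal block-overlap.

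To prove $D_{\delta'} \geq D_\delta$ I would first use Lemma \ref{lemma:extension_delta} to remove common extreme parts: whenever $\delta$ and $\delta'$ share both a largest and a smallest part (which happens exactly when neither grouped part is extremal and $\delta_i + \delta_j$ is not the new maximum), stripping this pair lowers the number of parts while preserving $\leq_{ft}$, so induction applies. This leaves the irreducible configurations, in which a grouped part is the smallest part of $\delta$, or $\delta_i + \delta_j$ is the largest part of $\delta'$. In each such case I would verify $D_{\delta'}(j) \geq D_\delta(j)$ directly: for $j$ lying in a given block of $\delta$, I exhibit a block of $\delta'$ whose overlap with $\{1,\dots,j\}$ is at least $D_\delta(j)$, using that the merged block has size $\delta_i + \delta_j$ and, in the irreducible cases, begins early enough to realize this overlap.

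The main obstacle is exactly this last bookkeeping. Inserting $\delta_i + \delta_j$ into its sorted position generally makes it jump over several parts of $\delta$, which shifts every boundary $p_l$ beyond the insertion point and hence alters the layout on which $D$ depends. The delicate point is to show that the enlarged block of $\delta'$ compensates for these shifts for every threshold $j$ simultaneously — in particular for $j$ falling in the blocks that have moved — and not merely at the block endpoints. I expect the cleanest route is to follow the two running maxima defining $D_\delta(j)$ and $D_{\delta'}(j)$ as $j$ increases, and to check that the inequality is maintained each time a block boundary of either partition is crossed.
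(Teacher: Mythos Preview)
Your reduction to a single pointwise inequality $\mu_{\max}(\delta')\leq\mu_{\max}(\delta)$ is valid and is a genuinely different route from the paper's argument. The paper does not isolate a maximal final type; after using Lemma~\ref{lemma:extension_delta} to reduce to the situation $\delta'_1=\delta_1+\delta_{n+1}$ and $\delta'_i=\delta_i$ for $2\leq i\leq n$, it takes an \emph{arbitrary} $\nu'$ with $\delta_{\nu'}=\delta'$ and compares the prescribed block-endpoint values from~\eqref{eqn:nu_det_by_delta} with those of any $\nu$ satisfying $\delta_\nu=\delta$; since the unconstrained entries of $\nu$ are free, a suitable $\nu\geq\nu'$ can then be chosen. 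Your approach trades this per-$\nu'$ construction for the comparison of two explicit step functions, which is structurally cleaner.

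The gap is the one you yourself flag: you do not actually prove $D_{\delta'}\geq D_\delta$, and the block-by-block case analysis you outline is heavier than necessary. Your own formula $D_\rho(j)=\max_l\min\bigl(a_l,(j-p_{l-1})^+\bigr)$ already yields, for each $k\geq 1$, the equivalence
\[
D_\rho(j)\geq k \iff j\;\geq\;k+\sum_{l:\,a_l<k}a_l
\]
(take $l$ minimal with $a_l\geq k$; the parts are sorted increasingly, so this minimises $p_{l-1}$). Hence $D_{\delta'}\geq D_\delta$ is equivalent to $\sum_{l:\,b_l<k}b_l\leq\sum_{l:\,a_l<k}a_l$ for every $k$, where the $b_l$ are the parts of $\delta'$. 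Replacing two parts $a,a'$ of $\delta$ by the single part $a+a'$ makes this immediate: if $a+a'<k$ then $a,a'<k$ and the two sums agree, while if $a+a'\geq k$ the left-hand side can only drop. This closes your argument in one line, without any inductive stripping via Lemma~\ref{lemma:extension_delta} and without the running-maxima bookkeeping you anticipated as the obstacle.
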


\begin{proof} By Lemma \ref{lemma:extension_delta} and induction on $g$ and $n$, we may assume that $\delta_1' = \delta_1 + \delta_{n+1}$ and $\delta_i = \delta_i'$, for $2\leq i \leq n$. We need to show that for any $\nu'$ such that $\delta' = \delta'_{\nu'}$, there is a $\nu$ such that $\delta = \delta_{\nu}$ such that $\nu' \leq \nu$, i.e., that 
$\nu'(j) \leq \nu(j)$ for all $1\leq j \leq g$. 

For any final type $\Tilde{\nu}$, the following inequality holds: $\Tilde{\nu}(a + b) \leq \Tilde{\nu}(a) + b$, where $a, b \geq 0$; this follows directly from the definition. By \eqref{eqn:nu_det_by_delta}, for any $\nu'$ such that $\delta' = \delta'_{\nu'}$ and $\nu$ such that $\delta = \delta_{\nu}$, we thus have 
\begin{equation}
\nu'(\sum_{i = j}^{n}\delta_i) = \nu'(\sum_{i = j}^n\delta_i') = \sum_{i = j + 1}^{n}\delta_i = \nu(\delta_{n + 1} + \sum_{i = j}^{n}\delta_i) - \delta_{n + 1} \leq \nu(\sum_{i = j}^{n}\delta_i) = \nu(\sum_{i = j}^{n}\delta_i') \text{ for any }2\leq j \leq n.
\label{eqn:ft_comparison_1}
\end{equation}
Similarly, 
\begin{equation}
\nu'(\sum_{i = 1}^{n + 1}\delta_i) = \nu'(\sum_{i = 1}^n\delta_i') = \sum_{i = 2}^n\delta_i' = \sum_{i = 2}^n\delta_i \leq \delta_{n + 1} + \sum_{i = 2}^n\delta_i  = \nu(\sum_{i = 1}^{n + 1}\delta_i) = \nu(\sum_{i = 1}^{n}\delta_i').
\label{eqn:ft_comparison_2}
\end{equation}
We conclude that for any $\nu$ (resp.~$\nu'$) such that $\delta = \delta_{\nu}$ (resp.~$\delta' = \delta'_{\nu'}$), inequalities \eqref{eqn:ft_comparison_1} and~\eqref{eqn:ft_comparison_2} hold. Furthermore, all such $\nu\vdash g$ (resp.~$\nu'\vdash g$) can be any final types whose values appearing on the right-hand (resp.~left-hand) side of \eqref{eqn:ft_comparison_1} and \eqref{eqn:ft_comparison_2} are prescribed. 
Once again, recall that $\sum_{i = 1}^{n+1}\delta_i = \sum_{i = 1}^n\delta'_i$ and $\sum_{i = j}^n\delta_i = \sum_{i = j}^n\delta'_i$ for any $2 \leq j \leq n$.
Therefore, given any $\nu' \vdash g$ such that $\delta' = \delta'_{\nu'}$, we can always find $\nu \vdash g$ such that $\delta = \delta_{\nu}$ for which~$\nu' \leq \nu$.
\end{proof}

As a consequence of the preceding observations, we can now prove the following proposition.

\begin{prop}
\label{prop:pp_implies_ft}
For any two partitions $\delta \vdash g$ and $\delta' \vdash g$, $\delta \leq_{pp} \delta'$ implies $\delta \leq_{ft} \delta'$.
\end{prop}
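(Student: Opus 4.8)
The plan is to deduce Proposition \ref{prop:pp_implies_ft} from Lemma \ref{lem:grouping} by induction on the number of grouping steps. The key observation is that the relation $\leq_{pp}$ is, by definition, generated by single \emph{grouping moves}: if $\delta \leq_{pp} \delta'$, then $\delta$ is obtained from $\delta'$ by partitioning the parts of $\delta'$ into blocks and summing each block. Any such grouping can be realized as a finite sequence of elementary moves, each of which merges exactly two parts of the current partition into one. Concretely, if $\delta$ is obtained from $\delta'$ by grouping, one can merge the parts two at a time, producing a chain of partitions $\delta' = \delta^{(0)}, \delta^{(1)}, \ldots, \delta^{(r)} = \delta$ in which each $\delta^{(k)}$ is obtained from $\delta^{(k-1)}$ by grouping exactly two parts.

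First I would verify that $\leq_{ft}$ is a genuine partial order, in particular that it is transitive; this is needed to chain the elementary comparisons together. Transitivity follows directly from the definition: if $\delta \leq_{ft} \delta'$ and $\delta' \leq_{ft} \delta''$, then for any $\nu$ with $\delta = \delta_\nu$ there is $\nu'$ with $\delta' = \delta'_{\nu'}$ and $\nu \leq \nu'$, and in turn a $\nu''$ with $\delta'' = \delta''_{\nu''}$ and $\nu' \leq \nu''$, whence $\nu \leq \nu''$ by transitivity of the lexicographic (pointwise) order on final types. With transitivity in hand, I would apply Lemma \ref{lem:grouping} to each elementary step $\delta^{(k)} \leq_{ft} \delta^{(k-1)}$ — noting that the lemma gives precisely $\delta^{(k)} \leq_{ft} \delta^{(k-1)}$ when $\delta^{(k)}$ is a coarser partition obtained by merging two parts of $\delta^{(k-1)}$. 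Composing these comparisons along the chain yields $\delta = \delta^{(r)} \leq_{ft} \delta^{(0)} = \delta'$, which is the desired conclusion.

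The main obstacle, and the point deserving the most care, is the reduction of an arbitrary grouping to a sequence of two-part merges. One must check that every intermediate partition $\delta^{(k)}$ in the chain is itself a legitimate partition of $g$ (which is automatic, since merging two parts preserves both positivity and the total sum) and that Lemma \ref{lem:grouping} applies at each stage — that is, that $\delta^{(k)}$ really is obtained from $\delta^{(k-1)}$ by grouping exactly two of its parts. This is a matter of bookkeeping: one fixes the target blocks prescribed by the relation $\delta \leq_{pp} \delta'$ and, within each block of size at least two, merges its elements pairwise, one adjacent pair per step, until the block collapses to its sum. Since $\leq_{pp}$ is defined via an equality of multisets, the order in which merges are performed does not affect the final multiset, so any convenient merging schedule produces $\delta$. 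The remaining details are routine and amount to confirming that the hypotheses of Lemma \ref{lem:grouping} are met at every step.
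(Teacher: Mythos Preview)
Your proposal is correct and follows essentially the same approach as the paper: decompose $\delta \leq_{pp} \delta'$ into a chain of single two-part merges, apply Lemma~\ref{lem:grouping} at each step, and use transitivity of $\leq_{ft}$ to conclude. The paper's proof is a one-liner citing Lemma~\ref{lem:grouping} and transitivity, whereas you spell out the decomposition and the verification of transitivity of $\leq_{ft}$ more carefully, but the underlying argument is identical.
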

\begin{proof}
This follows from Lemma \ref{lem:grouping} and the transitivity of $\leq_{pp}$. 
\end{proof}

In the remainder of this section, we investigate whether/when the reverse implication in the preceding proposition holds. We begin with the following lemma, which compares two partitions having the same number of parts.
 
\begin{lem}
Let $\delta = \delta_1\ldots\delta_n \vdash g$ and $\delta' = \delta_1'\ldots\delta_n' \vdash g$ be two distinct partitions with the same number of parts $|\delta| = |\delta'| = n \geq 1$. Then ${\delta} \not \leq_{ft} {\delta'}$ and ${\delta'} \not \leq_{ft} {\delta}$.
\label{lem:sameparts}
\end{lem}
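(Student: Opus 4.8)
The plan is to reduce the whole comparison to a comparison of two distinguished final types, and then to a single partition statistic. For a partition $\delta \vdash g$, the admissible final types $\nu$ with $\delta_\nu = \delta$ are precisely those of the form \eqref{eqn:nu_det_by_delta}, differing only in where the ``$+1$''-steps are placed inside each block (the block endpoints being fixed). First I would observe that placing every increase as early as possible in each block produces a pointwise-largest such final type $\nu^{\max}(\delta)$: within a block the starting value and the total increase are prescribed, so the earliest placement dominates every other placement position by position. This gives the clean equivalence, valid for arbitrary $\delta,\delta'\vdash g$: $\delta \leq_{ft}\delta'$ holds if and only if $\nu^{\max}(\delta)\leq\nu^{\max}(\delta')$ pointwise. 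Indeed, applying the definition of $\leq_{ft}$ to $\nu=\nu^{\max}(\delta)$ and then using $\nu'\leq\nu^{\max}(\delta')$ proves the forward direction, while $\nu\leq\nu^{\max}(\delta)\leq\nu^{\max}(\delta')$ for every admissible $\nu$ gives the converse.

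Next I would translate the pointwise domination of $\nu^{\max}$ into partition language. Since pointwise domination of non-decreasing $\{0,1,\ldots\}$-valued functions is equivalent to the inclusion of all upper level sets, and these level sets are suffixes of $\{1,\ldots,g\}$, the condition becomes $\#\{i:\nu^{\max}(\delta)(i)\geq v\}\leq\#\{i:\nu^{\max}(\delta')(i)\geq v\}$ for every $v\geq 1$. A direct computation with \eqref{eqn:nu_det_by_delta} shows that these level-set sizes are governed by the non-increasing step function $D_\delta(j)=$ (the size of the part of $\delta$ occupying position $j$); concretely, $\nu^{\max}(\delta)\leq\nu^{\max}(\delta')$ is equivalent to $D_{\delta'}(j)\leq D_\delta(j)$ for all $j$. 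Passing to level sets of $D_\delta$, whose sizes are $A(v):=\sum_{p:\,\delta_p\geq v}\delta_p$ (the number of cells lying in parts of size $\geq v$), I obtain the criterion: for any $\delta,\delta'\vdash g$, $\delta\leq_{ft}\delta'$ if and only if $A'(v)\leq A(v)$ for all $v\geq 1$, where $A'$ is the same statistic for $\delta'$. (Taking $v=\delta_1+1$ shows this already forces $\delta_1\geq\delta_1'$, so no separate bookkeeping for the largest parts is needed.)

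The finish is then a short moment computation using that the two partitions have the same number of parts. Suppose toward a contradiction that $\delta\leq_{ft}\delta'$ with $|\delta|=|\delta'|=n$, and set $C(v)=A(v)-A'(v)\geq 0$. Writing $m_w,m_w'$ for the multiplicities of the part value $w$, one has $C(w)-C(w+1)=w\,(m_w-m_w')$, while $C(1)=g-g=0$ and $\sum_w(m_w-m_w')=n-n=0$. Expanding $0=\sum_{w\geq1}\frac{C(w)-C(w+1)}{w}$ by Abel summation yields $\sum_{w\geq 2}\frac{C(w)}{w(w-1)}=C(1)=0$; since every summand is non-negative, $C(w)=0$ for all $w$, hence $A=A'$, hence $m_w=m_w'$ for all $w$, i.e.\ $\delta=\delta'$ — contradicting $\delta\neq\delta'$. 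Because both the criterion and this argument are symmetric in $\delta$ and $\delta'$, the same reasoning gives $\delta'\not\leq_{ft}\delta$, proving the lemma.

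I expect the main obstacle to be the explicit level-set computation in the second step: honestly verifying that $\nu^{\max}(\delta)$-domination is equivalent to pointwise domination of the part-size functions $D_\delta$ requires care with the block indexing in \eqref{eqn:nu_det_by_delta} and some off-by-one attention near the endpoints, whereas both the reduction to $\nu^{\max}$ and the Abel-summation conclusion are brief. (As an alternative to streamline the computation, Lemma \ref{lemma:extension_delta} could be used first to strip off any common largest and smallest parts, reducing to partitions whose extreme parts differ; this simplifies the bookkeeping but is not logically necessary for the argument above.)
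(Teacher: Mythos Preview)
Your argument is correct and takes a genuinely different route from the paper. The paper proceeds constructively: after using Lemma~\ref{lemma:extension_delta} to reduce to $\delta_1>\delta_1'$ and $\delta_n\neq\delta_n'$, it reads off $\delta'\not\leq_{ft}\delta$ from $\nu(g)<\nu'(g)$, and for $\delta\not\leq_{ft}\delta'$ it locates the largest index $j$ with $\sum_{i\geq j}\delta_i<\sum_{i\geq j}\delta_i'$ and exhibits a single final type $\nu$ with $\delta_\nu=\delta$ that beats every admissible $\nu'$ at the position $m=\sum_{i\geq j}\delta_i+1$. You instead first prove a general criterion, $\delta\leq_{ft}\delta'\iff A'(v)\leq A(v)$ for all $v$ with $A(v)=\sum_{p:\,\delta_p\geq v}\delta_p$, via the reduction to $\nu^{\max}$, and then close with an Abel-summation identity showing that $A'\leq A$ together with $|\delta|=|\delta'|$ forces $A=A'$ and hence $\delta=\delta'$. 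The paper's proof is shorter and stays close to the definition; your approach costs more in the level-set computation but produces a clean, reusable characterisation of $\leq_{ft}$ that would also streamline, e.g., Proposition~\ref{prop:ft_do}.

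Two small comments on the write-up. First, with the block ordering of \eqref{eqn:nu_det_by_delta} (smallest part first) the function $D_\delta$ is non-decreasing, not non-increasing; this is only a labelling issue since the argument uses only the level-set sizes $A(v)$. Second, the function $i\mapsto i-\nu^{\max}(\delta)(i)$ is not literally equal to $D_\delta$: on block $j$ it equals $\max(\delta_{j+1},\,i-S_{j+1})$ where $S_{j+1}=\sum_{k>j}\delta_k$, so its level-set at height $v$ has size $A(v)-v+1$ rather than $A(v)$. The discrepancy is the same for $\delta$ and $\delta'$ and cancels, so your stated equivalence $\nu^{\max}(\delta)\leq\nu^{\max}(\delta')\iff D_{\delta'}\leq D_\delta\iff A'\leq A$ is correct; just make this explicit when you carry out the ``direct computation'' you flag as the delicate step.
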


\begin{proof}
For $g = 2, 3, 4$, this is clear and was observed in Example \ref{exmp:345}. Without loss of generality and by Lemma \ref{lemma:extension_delta} and induction on $g$, we may assume that $\delta_1>\delta_1'$ and $\delta_n \neq \delta_n'$. 
Note the first condition is equivalent to $(*)$: $$\sum_{i = 2}^n\delta_i<\sum_{i = 2}^n\delta_i';$$ alternatively, $\nu(g) < \nu'(g)$ for any $\nu$ (resp.~$\nu'$) such that $\delta = \delta_{\nu}$ (resp.~$\delta' = \delta'_{\nu'}$). This implies $\delta' \not \leq_{ft} \delta$. 
It remains to show that there is a final type $\nu \vdash g$ with $\delta = \delta_{\nu}$ such that for all final types $\nu' \vdash g$ with $\delta' = \delta'_{\nu'}$ it holds that $\nu(m)>\nu'(m)$ for some $m = m(\nu')$.

By $(*)$, there is the largest $2\leq j\leq n$ such that $(\#): \sum_{i = j}^n\delta_i<\sum_{i = j}^n\delta_i'$. 
Consider the type $\nu\vdash g$ such that $\delta = \delta_{\nu}$ and $$\nu(\sum_{i = j}^n\delta_i + 1) = 1 + \sum_{i = j+1}^n\delta_i;$$ the existence of such $\nu$ follows from~\eqref{eqn:nu_det_by_delta} and $j \geq 2$.
The definition of $j$ implies that $$\nu'(\sum_{i = j}^n\delta_i + 1)\underset{(\#)}{\leq} \nu'(\sum_{i = j}^n\delta_i') = \sum_{i = j+1}^n\delta_i' < 1 + \sum_{i = j+1}^n\delta_i = \nu(\sum_{i = j}^n\delta_i + 1). $$ Therefore, we have found $m = \sum_{i = j}^n\delta_i + 1$ for which $\nu(m) > \nu'(m)$ and thus $\delta \not \leq_{ft} \delta'$. The result follows.
\end{proof}

In the proposition below, we show that $\delta \leq_{ft} \delta'$ implies $\delta' \leq_{do} \delta$. 

\begin{prop}
\label{prop:ft_do}
For any two partitions $\delta = \delta_{1}\ldots\delta_{n}\vdash g$ and $\delta' = \delta_1'\ldots \delta_m'\vdash g$, $\delta \leq_{ft} \delta'$ implies $n \leq m$ and $\delta' \leq_{do} \delta$, i.e., $\sum_{i = m - k}^{m}\delta'_i \leq \sum_{i = n - k}^{n}\delta_i$ for any $k \geq 0$. 
\end{prop}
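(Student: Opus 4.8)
The plan is to translate both assertions into comparisons of the iterates $\nu^k(g)$, using the dictionary supplied by \eqref{eqn:definition_delta}: if $\delta_\nu = \delta = \delta_1\ldots\delta_n$, then $\nu^{j-1}(g) = \sum_{i=j}^n\delta_i$, so that $\nu^{n-k-1}(g) = \sum_{i=n-k}^n\delta_i$ is exactly the sum of the last $k+1$ parts of $\delta$, and likewise $\nu'^{m-k-1}(g) = \sum_{i=m-k}^m\delta_i'$ whenever $\delta'_{\nu'} = \delta'$. Thus the inequality $\sum_{i=m-k}^m\delta_i' \le \sum_{i=n-k}^n\delta_i$ and the bound $n \le m$ both become statements about iterates of $\nu$ and $\nu'$ read ``from the end''. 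The engine is an elementary monotonicity lemma: if $\nu \le \nu'$ are final types, then $\nu^k(v) \le \nu'^k(v)$ for all $k \ge 0$ and all $0 \le v \le g$. I would prove this by induction on $k$, using at the inductive step that $\nu(\nu^k(v)) \le \nu'(\nu^k(v)) \le \nu'(\nu'^k(v))$, where the first inequality is $\nu \le \nu'$ pointwise and the second combines the monotonicity of $\nu'$ with $\nu^k(v) \le \nu'^k(v)$.

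Next I would fix the \emph{pointwise-maximal} final type $\nu_{\max}$ with $\delta_{\nu_{\max}} = \delta$, obtained from \eqref{eqn:nu_det_by_delta} by placing all the jumps as early as possible inside each block; among all final types realizing $\delta$ this one dominates every other pointwise. Applying the definition of $\leq_{ft}$ to this particular type produces a single $\nu'$ with $\delta'_{\nu'} = \delta'$ and $\nu_{\max} \le \nu'$, and this one $\nu'$ will supply all the required inequalities. The bound $n \le m$ then follows immediately: the monotonicity lemma gives $\nu_{\max}^m(g) \le \nu'^m(g) = 0$, so $\nu_{\max}^m(g) = 0$, while $\nu_{\max}^{n-1}(g) = \delta_n > 0$; hence $m \ge n$.

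The core computation is that for the early-jump type the orbit of $g$ realizes the vanishing thresholds exactly, namely $\max\{v : \nu_{\max}^l(v) = 0\} = \nu_{\max}^{n-l}(g)$ for $0 \le l \le n$. Writing $s_j = \sum_{i=j}^n\delta_i$, I would prove this by induction on $l$: since $\nu_{\max}^l(v) = 0$ is equivalent to $\nu_{\max}(v) \le \max\{w : \nu_{\max}^{l-1}(w) = 0\} = s_{n-l+2}$, and since the boundary relation $\nu_{\max}(s_{n-l+1}) = s_{n-l+2}$ together with the early-jump property gives $\nu_{\max}(s_{n-l+1}+1) = s_{n-l+2}+1 > s_{n-l+2}$, the threshold is pinned at $s_{n-l+1} = \nu_{\max}^{n-l}(g)$ (the case $l = n$ being handled by the domain ending at $g$). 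Granting this, I set $w = \nu'^{m-k-1}(g) = \sum_{i=m-k}^m\delta_i'$; since $\nu'^{k+1}(w) = \nu'^m(g) = 0$, the monotonicity lemma yields $\nu_{\max}^{k+1}(w) \le \nu'^{k+1}(w) = 0$, whence $w \le \max\{v : \nu_{\max}^{k+1}(v) = 0\} = \nu_{\max}^{n-k-1}(g) = \sum_{i=n-k}^n\delta_i$. This is precisely the asserted tail-sum inequality, with the cases $k \ge n$ trivial since both sides equal $g$.

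The main obstacle is the threshold computation of the last paragraph; everything else is formal. This is the one step where the specific early-jump structure of $\nu_{\max}$ is indispensable: for a general final type $\nu$ with $\delta_\nu = \delta$ the quantity $\max\{v : \nu^l(v) = 0\}$ can strictly exceed $\nu^{n-l}(g)$, which would yield only the weaker estimate $w \le \max\{v : \nu^l(v) = 0\}$ rather than the sharp comparison with the tail sums of $\delta$. Choosing the maximal $\nu$ makes these thresholds as small as possible, forcing equality with the tail sums and closing the argument.
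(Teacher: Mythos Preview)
Your argument is correct. The monotonicity lemma for iterates, the choice of the early-jump type $\nu_{\max}$, and the threshold identity $\max\{v:\nu_{\max}^l(v)=0\}=s_{n-l+1}$ all check out, and together they yield the tail-sum inequalities cleanly. One tiny slip of phrasing: for $n\le k<m-1$ the left-hand side $\sum_{i=m-k}^m\delta_i'$ need not equal $g$, but the inequality is still trivial because the right-hand side does; so ``both sides equal $g$'' should read ``the right-hand side equals $g$''.

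Your route is genuinely different from the paper's. The paper first uses Lemma~\ref{lemma:extension_delta} to strip matching extremal parts, and then proceeds index by index: for each $j$ it chooses a final type $\nu$ with the single pinned value $\nu(s_j+1)=s_{j+1}+1$, picks a corresponding $\nu'\ge\nu$, and derives a contradiction at that one coordinate assuming the $j$-th tail inequality fails. You instead fix one global object $\nu_{\max}$ (which satisfies all of those pinned values simultaneously), prove once and for all that pointwise comparison propagates to iterates, and read off every tail inequality from the single threshold computation. What your approach buys is uniformity and a reusable lemma about iterates; it also makes transparent \emph{why} the maximal type is the right witness, namely because it minimises the vanishing thresholds $\max\{v:\nu^l(v)=0\}$. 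What the paper's approach buys is that it never needs the iterate lemma or the full threshold identity: a single-coordinate comparison suffices at each step, so the argument stays entirely at the level of the defining inequality $\nu(i)\le\nu'(i)$.
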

\begin{proof}
As before, using Lemma \ref{lemma:extension_delta}, it suffices to consider the case $\delta_1 \neq \delta_1'$ and $\delta_n \neq \delta_m'$. Note that $\sum_{i = m - k}^{m}\delta'_i \leq \sum_{i = n - k}^{n}\delta_i$ for $k \geq 0$ implies $n \leq m$. (If $n > m$, then the choice $k = m - 1$ would give $g = \sum_{i = 1}^m\delta_i' \leq \sum_{i = n - m + 1}^n\delta_i \leq g - \delta_{n - m} < g$, which is impossible.) 
We want to show 
\begin{equation}
\sum_{i = j + m - n}^{m}\delta'_i \leq \sum_{i = j}^{n}\delta_i \quad \text{for all}\quad 1 \leq j \leq n.
\label{eqn:do_dir_ineq}    
\end{equation}
Note that the preceding inequality trivially holds for $j = 1$.
Furthermore, it holds for $j = n$:
consider any final type $\nu \vdash g$ such that $\delta = \delta_{\nu}$ and $$\nu(\delta_n + 1) = 1$$ (see \eqref{eqn:nu_det_by_delta}), and let $\nu' \vdash g$ be a final type such that $\delta' = \delta'_{\nu'}$ and $\nu \leq_{ft} \nu'$. (This $\nu'$ exists because of $\delta \leq_{ft} \delta'$.) Then, the assumption $\delta_m' > \delta_n$ would imply $$1 = \nu(\delta_n + 1)\leq \nu'(\delta_n + 1) \leq \nu'(\delta_m') = 0.$$ As this is impossible, we conclude $\delta_m'\leq \delta_n$. 

Now, we want to show the inequality \eqref{eqn:do_dir_ineq} for the remaining $2 \leq j < n$. Assume the contrary, and let $2\leq j < n$ be the largest one for which \eqref{eqn:do_dir_ineq} does not hold. Then $$\sum_{i = j  + m - n}^{m}\delta_i' > \sum_{i = j}^{n}\delta_i, \text{ while } \sum_{i = j  + m - n + 1}^{m}\delta_i' \leq \sum_{i = j + 1}^{n}\delta_i$$
Let $\nu \vdash g$ be a final type such that $\delta = \delta_{\nu}$ and $$\nu(\sum_{i = j}^{n}\delta_i + 1) = \sum_{i = j + 1}^{n}\delta_i + 1;$$ see \eqref{eqn:nu_det_by_delta}. Then $\nu \leq_{ft} \nu'$ would imply $$\sum_{i = j + m -n + 1}^{m}\delta_i'< \sum_{i = j + 1}^{n}\delta_i + 1 = \nu(\sum_{i = j}^{n}\delta_i + 1) \leq \nu(\sum_{i = j + m - n}^{m}\delta_i') \leq \nu'(\sum_{i = j + m - n}^{m}\delta_i') = \sum_{i = j + m - n + 1}^{n + 1}\delta_i'.$$
Since this is impossible, \eqref{eqn:do_dir_ineq} holds for every $1 \leq j \leq n$. 
\end{proof}

We present a criterion for comparing two partitions $\delta\vdash g$ and~$\delta' \vdash g$ whose numbers of parts differ by one.

\begin{prop}
Let $\delta = \delta_1\ldots\delta_n \vdash g$ and $\delta' = \delta'_1\ldots\delta'_n\delta_{n+1}'\vdash g$ such that $\delta_1 \neq \delta_1'$ and $\delta_{n}\neq \delta_{n+1}'$. Then ${\delta'}\not\leq_{ft} {\delta}$, while ${\delta}\leq_{ft} {\delta'}$ implies $$\delta_1' < \delta_1 \quad \text{ and } \quad \sum_{i = j + 1}^{n + 1}\delta_i' \leq  \sum_{i = j }^n\delta_i \leq \sum_{i = j}^{n + 1}\delta'_i \text{ for all }1 \leq j \leq n.$$  

\label{prop:comparingdeltas}
\end{prop}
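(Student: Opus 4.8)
The plan is to dispose of the statement $\delta'\not\leq_{ft}\delta$ at once, to obtain the left half of the displayed inequality directly from Proposition \ref{prop:ft_do}, and then to prove the right half $\sum_{i=j}^{n}\delta_i\leq\sum_{i=j}^{n+1}\delta_i'$ by a short induction, from which $\delta_1'<\delta_1$ will drop out. Throughout I would write $a_j=\sum_{i=j}^{n}\delta_i$ and $b_j=\sum_{i=j}^{n+1}\delta_i'$ for the tail sums, so that $a_1=b_1=g$, and I recall from \eqref{eqn:nu_det_by_delta} the milestone relations: every $\nu$ with $\delta=\delta_\nu$ satisfies $\nu(a_l)=a_{l+1}$, and every $\nu'$ with $\delta'=\delta'_{\nu'}$ satisfies $\nu'(b_l)=b_{l+1}$.

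First, $\delta'\not\leq_{ft}\delta$ is immediate from Proposition \ref{prop:ft_do}: were $\delta'\leq_{ft}\delta$ to hold, that proposition (with the two partitions interchanged) would force $|\delta'|\leq|\delta|$, i.e.\ $n+1\leq n$, which is absurd. Next, assume $\delta\leq_{ft}\delta'$. The left inequality $\sum_{i=j+1}^{n+1}\delta_i'\leq\sum_{i=j}^{n}\delta_i$, that is $b_{j+1}\leq a_j$, is exactly the right-dominance conclusion $\delta'\leq_{do}\delta$ of Proposition \ref{prop:ft_do} (take $k=n-j$ in its statement, with $m=n+1$). So the only genuine content left is the right inequality $a_j\leq b_j$.

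I would prove $a_j\leq b_j$ for $1\leq j\leq n$ by induction on $j$. The base case $j=1$ is the equality $a_1=g=b_1$. For the inductive step, suppose $a_{j-1}\leq b_{j-1}$; this is the crucial input, as it places the $\delta'$-milestone position $b_{j-1}$ to the \emph{right} of the $\delta$-milestone position $a_{j-1}$. Choose any $\nu$ with $\delta=\delta_\nu$; by $\delta\leq_{ft}\delta'$ there is $\nu'$ with $\delta'=\delta'_{\nu'}$ and $\nu\leq\nu'$. Since $\nu$ is non-decreasing and $b_{j-1}\geq a_{j-1}$, the milestone relation gives $\nu(b_{j-1})\geq\nu(a_{j-1})=a_j$, while the milestone relation for $\nu'$ gives $\nu'(b_{j-1})=b_j$. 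Combining,
\[
a_j=\nu(a_{j-1})\leq\nu(b_{j-1})\leq\nu'(b_{j-1})=b_j,
\]
which is the claim at index $j$. Finally I deduce $\delta_1'<\delta_1$: when $n\geq 2$, the case $j=2$ reads $g-\delta_1=a_2\leq b_2=g-\delta_1'$, i.e.\ $\delta_1'\leq\delta_1$, which is strict because $\delta_1\neq\delta_1'$ by hypothesis; when $n=1$ one has $\delta=\delta_1=g$ and $\delta_1'\leq g-\delta_2'<g=\delta_1$ automatically.

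The main obstacle — indeed the only delicate point — is the choice of probe in the inductive step: one must test the relation $\nu\leq\nu'$ at the $\delta'$-milestone $b_{j-1}$ rather than at a $\delta$-milestone, and this is legitimate precisely because the inequality already established at the previous index guarantees $b_{j-1}\geq a_{j-1}$, which is exactly what upgrades $\nu(b_{j-1})$ past the value $a_j$. The $\not\leq_{ft}$ statement and the left inequality need no work beyond Proposition \ref{prop:ft_do}, and no appeal to Lemma \ref{lemma:extension_delta} is required for this argument.
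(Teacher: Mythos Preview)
Your proof is correct and follows essentially the same route as the paper: both derive $\delta'\not\leq_{ft}\delta$ and the left inequality from Proposition~\ref{prop:ft_do}, and both obtain the right inequality $a_j\leq b_j$ by induction on $j$ using one application of $\nu\leq\nu'$ together with monotonicity of a final type. The only cosmetic difference is that the paper evaluates at the $\delta$-milestone $a_{j-1}$ (writing $a_j=\nu(a_{j-1})\leq\nu'(a_{j-1})\leq\nu'(b_{j-1})=b_j$), whereas you evaluate at the $\delta'$-milestone $b_{j-1}$; and the paper reads off $\delta_1'<\delta_1$ first from $\nu(g)\leq\nu'(g)$ rather than as the case $j=2$ at the end.
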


\begin{proof}
First, note that ${\delta}' \not\leq_{ft} {\delta}$ and $\delta \leq_{ft} \delta' \implies \sum_{i = j}^n\delta_i \geq \sum_{i = j + 1}^{n + 1}\delta'_i \text{ for } 2 \leq j \leq n$ follows from Proposition \ref{prop:ft_do}.
Now, let us assume that ${\delta}\leq_{ft} {\delta'}$ and given $\nu \vdash g$ such that $\delta = \delta_{\nu}$, let $\nu' \vdash g$ be a final type such that $\delta' = \delta'_{\nu'}$ and $\nu \leq \nu'$. Since $g - \delta_1 = \nu(g)\leq \nu'(g) = g - \delta_1'$ and $\delta_1 \neq \delta_1'$, we conclude $\delta_1' < \delta_1$, or equivalently, $\sum_{i = 2}^{n}\delta_i < \sum_{i = 2}^{n+1}\delta_i'$. Note that $\nu \leq \nu'$ and $\sum_{i = j}^{n}\delta_i \leq \sum_{i = j}^{n +1}\delta_i'$ imply 
$$\sum_{i = j + 1}^{n}\delta_i = \nu(\sum_{i = j }^{n}\delta_i) \leq \nu'(\sum_{i = j}^{n}\delta_i) \leq \nu'(\sum_{i = j}^{n+1}\delta_i') = \sum_{i = j +1}^{n+1}\delta_i',$$ 
for any $1 \leq j \leq n-1$. 
Therefore, the induction on $j$ implies that
\begin{equation}
\sum_{i = j}^{n}\delta_i \leq \sum_{i = j}^{n +1}\delta_i' \quad \text{for all}\quad 1 \leq j \leq n.
\label{eqn:one_dir_ineq}    
\end{equation}
\end{proof}

As a consequence, we show below that ${\delta}\leq_{ft} {\delta}'$ if and only if $\delta_{ft} \leq_{pp} \delta'$ in a special case. 

\begin{cor}
Let $\delta = \delta_1\ldots\delta_n\vdash g$ and $\delta' = \delta'_1\ldots\delta'_n\delta_{n+1}'\vdash g$ be two partitions such that~$\delta_1 \neq \delta_1'$ and $\delta_n \neq \delta_{n+1}' = 1$. Then ${\delta}\leq_{ft} {\delta}'$ if and only if $\delta \leq_{pp} \delta'$. 
\label{cor:lastdeltaisone}
\end{cor}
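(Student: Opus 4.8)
The plan is to prove the two implications separately, the backward one being immediate and the forward one carrying all the content. For $\delta \leq_{pp}\delta'\implies \delta\leq_{ft}\delta'$ I would simply invoke Proposition \ref{prop:pp_implies_ft}. So the real work is to show $\delta\leq_{ft}\delta'\implies \delta\leq_{pp}\delta'$.

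First I would pin down what $\leq_{pp}$ means under these hypotheses. Since $\delta'$ has exactly one more part than $\delta$, and its smallest part $\delta'_{n+1}=1$ cannot survive as a part of $\delta$ (every part of $\delta$ is $\geq\delta_n\geq 2$), any witnessing grouping must absorb $\delta'_{n+1}$ into another part; equivalently, $\delta$ is obtained from $(\delta'_1,\ldots,\delta'_n)$ by adding $1$ to a single part. The hypothesis $\delta_1\neq\delta'_1$ then forces that added box onto a maximal part, giving the clean reformulation
\[
\delta\leq_{pp}\delta'\iff \delta=(\delta'_1+1,\delta'_2,\ldots,\delta'_n).
\]
Thus it suffices to show that $\delta\leq_{ft}\delta'$ forces $\delta$ to have exactly this form.

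The key device for the forward direction is to test the definition of $\leq_{ft}$ against a single extremal final type. Among all $\nu\vdash g$ with $\delta_\nu=\delta$ there is a pointwise-largest one, call it $\nu_{\max}$: such $\nu$ share the forced values at the positions $\sum_{i=k}^n\delta_i$ coming from \eqref{eqn:nu_det_by_delta}, and the family is closed under pointwise maximum, so $\nu_{\max}$ is the final type that rises as early as possible between these forced values. Applying the definition of $\leq_{ft}$ to $\nu=\nu_{\max}$, and using that every $\nu'$ with $\delta'_{\nu'}=\delta'$ has forced values $\nu'(P'_j)=\sum_{i=j+1}^{n+1}\delta'_i$ at the positions $P'_j:=\sum_{i=j}^{n+1}\delta'_i$, I obtain the necessary conditions
\[
\nu_{\max}(P'_j)\leq \sum_{i=j+1}^{n+1}\delta'_i\qquad\text{for all }1\leq j\leq n+1 .
\]
Conversely these suffice: between two consecutive pins of $\delta'$, the early-rising maximal final type for $\delta'$ dominates any final type constrained at both surrounding pins, so this family of pin inequalities is in fact equivalent to $\delta\leq_{ft}\delta'$.

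Finally I would feed the pin inequalities into the structure forced by $\delta'_{n+1}=1$. Because $P'_j=\bigl(\sum_{i=j}^n\delta'_i\bigr)+1$ sits exactly one step above the tail sum of $\delta'$, the inequality at $P'_j$ says precisely that $\nu_{\max}$ has accumulated at least $\delta'_j$ non-increasing steps on $[1,P'_j]$. Combining Proposition \ref{prop:comparingdeltas} (which already supplies $\delta_1>\delta'_1$ together with the two-sided tail estimates) with these step counts, read off from the explicit early-rising shape of $\nu_{\max}$, I would show $\sum_{i=j}^n\delta_i=\sum_{i=j}^n\delta'_i$ for every $2\leq j\leq n$; this gives $\delta_i=\delta'_i$ for $2\leq i\leq n$ and then $\delta_1=\delta'_1+1$, i.e. the required $\leq_{pp}$-form. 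The main obstacle is exactly this last bookkeeping: Proposition \ref{prop:comparingdeltas} by itself is not sufficient here (for instance $622$ satisfies all of its inequalities relative to $4321$, yet $622\not\leq_{ft}4321$), so one genuinely needs to track where $\nu_{\max}$ reaches each value and verify that an over-large first part $\delta_1$, or any discrepancy in the tail, makes $\nu_{\max}$ overshoot one of the forced pin values. For the tail discrepancy there is a conceptual shortcut: once $\delta_1=\delta'_1+1$, the truncations $(\delta_2,\ldots,\delta_n)$ and $(\delta'_2,\ldots,\delta'_n)$ are distinct partitions of the same integer with equally many parts, so their incomparability via Lemma \ref{lem:sameparts} is what the pin inequalities must ultimately detect.
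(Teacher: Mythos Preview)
Your overall plan is sound and is in fact the same idea the paper uses, just packaged globally: where the paper constructs, at each stage of an induction on the index, a particular $\nu$ with one extra early rise (e.g.\ $\nu(\delta_n+2)=2$) and derives a contradiction, you take the single pointwise-maximal $\nu_{\max}$ once and for all and read off all the ``pin'' constraints $\nu_{\max}(P'_j)\le 1+\sum_{i=j+1}^{n}\delta'_i$ simultaneously. Your reformulation of $\leq_{pp}$ as $\delta=(\delta'_1+1,\delta'_2,\ldots,\delta'_n)$ under the stated hypotheses is also correct and is a mild sharpening of what the paper proves (it only concludes that some $\delta_j=\delta'_j+1$ with $\delta_i=\delta'_i$ for $i\ne j$; the hypothesis $\delta_1\ne\delta'_1$ then forces $\delta'_j=\delta'_1$, which yields the same multiset).

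The gap is in your final paragraph. You correctly identify that Proposition~\ref{prop:comparingdeltas} alone is insufficient and that the content lies in showing the pin inequalities force $S_j-S'_j\in\{0,1\}$ to be non-increasing in $j$ (where $S_j=\sum_{i\ge j}\delta_i$, $S'_j=\sum_{i\ge j}\delta'_i$); but you do not carry this out. The paper does exactly this by induction on the index: starting from $\delta_n$, it uses a specific early-rising $\nu$ to rule out $\delta_n<\delta'_n$, concluding $\delta_n\in\{\delta'_n,\delta'_n+1\}$, and then repeats the argument at the next index. Your $\nu_{\max}$ formulation yields the same induction once you unwind $\nu_{\max}(P'_j)$ explicitly on each block $[S_j,S_{j-1}]$, so this is a matter of execution, not of strategy.

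Your suggested ``conceptual shortcut'' via Lemma~\ref{lem:sameparts}, however, does not go through as stated. Even granting $\delta_1=\delta'_1+1$, the $\leq_{ft}$-incomparability of the truncations $(\delta_2,\ldots,\delta_n)$ and $(\delta'_2,\ldots,\delta'_n)$ is a statement about final types of length $g-\delta_1$, and nothing in the paper transfers this back to the comparison of $\delta$ and $\delta'$ in length $g$: Lemma~\ref{lemma:extension_delta} only applies when the outer parts appended on both sides coincide, which is exactly what fails here ($\delta_1\ne\delta'_1$, and $\delta'$ carries an extra trailing~$1$ that $\delta$ does not). So the shortcut is not a substitute for the inductive bookkeeping; you have to do that computation directly, as the paper does.
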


\begin{proof}
By Proposition \ref{prop:pp_implies_ft}, ${\delta}\leq_{pp} {\delta}'$ implies $\delta \leq_{ft} \delta'$.
Assume now that ${\delta} \leq_{ft} {\delta}'$.  Applying the obtained inequality \eqref{eqn:one_dir_ineq} from
Proposition~\ref{prop:comparingdeltas} to $\delta$ and $\delta'$ yields $2 \leq \delta_n \leq \delta_n' + 1$ and more generally, $\sum_{i = j}^n \delta_i \leq 1 + \sum_{i = j}^n \delta_i'$ for any $1 \leq j \leq n$. 

First, we claim that either $$\delta_n = \delta_n'\quad\text{ or }\quad \delta_n = \delta_n' + 1.$$ Otherwise $2 \leq \delta_n \leq \delta_n' - 1$, and let us consider $\nu \vdash g$ such that $\delta = \delta_{\nu}$ and $$\nu(\delta_n + 2) = 2$$ (and thus $\nu(\delta_n + 1) = 1$); recall \eqref{eqn:nu_det_by_delta} and note that $\delta_{n - 1} \geq \delta_n \geq 2$ and $\Tilde{\nu}(\delta_n + \delta_{n - 1}) = \delta_n \geq 2$ for any $\Tilde{\nu} \vdash g$ such that $\delta = \delta_{\Tilde{\nu}}$. However, then for any $\nu'\vdash g$ such that $\delta' = \delta'_{\nu'}$ and $\nu \leq \nu'$ it would follow $$2\leq \nu(\delta_n + 2) \leq \nu'(\delta_n' + 1) \leq \nu'(\delta_n' + 1) = 1, $$
which is absurd. Therefore, it holds either $\delta_n = \delta_n'$ or $\delta_n = \delta_n' + 1$. 

Assume that $\delta_n = \delta_n' + 1$. Then \eqref{eqn:one_dir_ineq} for $i = n - 1$ implies $\delta_{n-1} \leq \delta_{n - 1}'$, and by induction we obtain $\delta_{i} \leq \delta_{i}'$ for all $1 \leq i < n$. Since $\delta$ and $\delta'$ are both partitions of $g$, this forces $\delta_{i} = \delta_{i}'$ for all~$1 \leq i < n$ and thus~$\delta \leq_{pp} \delta'$. 

In the remaining case, $\delta_n = \delta_n'$, \eqref{eqn:one_dir_ineq} for $i = n - 1$ implies $\delta_{n-1} \leq \delta_{n - 1}' + 1$, and therefore, either $$\delta_{n-1} = \delta_{n - 1}'\quad \text{ or } \quad\delta_{n-1} \leq \delta_{n - 1}' + 1,$$ using a similar argument as for $i = n$. More generally, the induction on $i$ with the base step~$i = n$ yields an $1 \leq i = j \leq n$ for which $\delta_{j} = \delta_{j}' + 1$, while for all other $i \neq j$ it holds that~$\delta_i = \delta_i'$. This shows that  $\delta \leq_{pp} \delta'$ and finishes the proof.
 \end{proof}

In particular, the preceding corollary (together with Lemma \ref{lemma:extension_delta}) confirms our claim stated in the paragraph preceding Remark \ref{rem:do}, namely, that the diagram in Figure \ref{fig:partitions_genus345} also satisfies $\delta \to \delta'$ if $\delta \leq_{ft} \delta'$. Therefore, for $g \leq 5$, the two orders $\leq_{ft}$ and $\leq_{pp}$ on the set of all partitions $\delta \vdash g$ coincide.
However, we present below an example showing that the assumption $\delta_{n + 1}' = 1$ used in Corollary \ref{cor:lastdeltaisone} was necessary, and that $\delta \leq_{ft} \delta'$ is not equivalent to $\delta \leq_{pp} \delta'$ in general. 

\begin{exmp}
\label{exmp:Example6}
When $\delta_{n+1}'>1$, the result analogous to that in Corollary \ref{cor:lastdeltaisone} no longer holds. The partitions $\delta = 33$ and $\delta' = 222$ provide the first (i.e., smallest $g$) such example. Namely, by \eqref{eqn:nu_det_by_delta}, final types $\nu$ (resp.~$\nu'$) such that $\delta = \delta_{\nu}$ (resp.~$\delta' = \delta'_{\nu'}$) are of the form $\nu = (*, *, 0, *, *, 3)$ (resp.~$\nu' = (*,0, *, 2, *, 4)$). A closer look, following the definition of final types, shows that in both cases there is precisely one final type: $$\nu = (0, 0, 0, 1, 2, 3)\quad \text{and}\quad \nu' = (0, 0, 1, 2, 3, 4),$$ and that $\nu \leq \nu'$.  
 In particular, this shows ${\delta}\leq_{ft} {\delta'}$, while ${\delta}\not\leq_{pp} {\delta'}$. 

 Below is the diagram of all partitions $\delta \vdash 6$, where the relations $\delta \leq_{ft} \delta'$ are indicated. 
  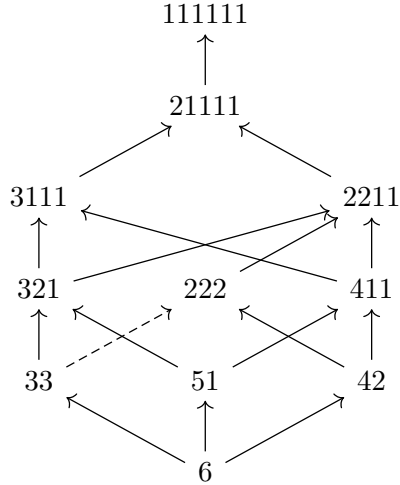
\begin{figure}[H]
 \begin{center}
 \begin{tikzcd}
	& 111111 \\
	& 21111 \\
	3111 && 2211 \\
	321 & 222 & 411 \\
	33 & 51 & 42 \\
	& 6
	\arrow[from=2-2, to=1-2]
	\arrow[from=3-1, to=2-2]
	\arrow[from=3-3, to=2-2]
	\arrow[from=4-1, to=3-1]
	\arrow[from=4-3, to=3-1]
	\arrow[from=4-3, to=3-3]
	\arrow[from=4-2, to=3-3]
	\arrow[from=5-1, to=4-1]
	\arrow[from=5-2, to=4-1]
	\arrow[from=5-2, to=4-3]
	\arrow[from=5-3, to=4-3]
	\arrow[from=5-3, to=4-2]
	\arrow[from=6-2, to=5-1]
	\arrow[from=6-2, to=5-2]
	\arrow[from=6-2, to=5-3]
	\arrow[from=4-1, to=3-3]
	\arrow[dashed, from=5-1, to=4-2]
\end{tikzcd}
\end{center}
     \caption{Partitions $\delta$ of $g = 6$; $\delta \rightarrow \delta'$ if ${\delta} \leq_{ft} {\delta'}$.}
     \label{fig:partitions_genus6}
 \end{figure}
\end{exmp}

\subsection{A summary of the comparison of three partial orders}

 We conclude this section with the summary presented in the following proposition showing an interesting nature of~$\leq_{ft}$. 

\begin{prop}
\label{prop:last}
For any two partitions $\delta \vdash g$ and $\delta' \vdash g$, it holds that $$\delta \leq_{pp} \delta' \implies \delta \leq_{ft} \delta' \implies \delta \leq_{do} \delta'.$$
In general, $\delta \leq_{ft} \delta' \not \implies \delta \leq_{pp} \delta'$ and $\delta \leq_{do} \delta' \not \implies \delta \leq_{ft} \delta'$.
\end{prop}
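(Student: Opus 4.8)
The plan is to assemble the proposition entirely from results already established in this section, since each of the two implications and each of the two non-implications has effectively been proved earlier. I would organize the argument as four short verifications and present it in that order.

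For the chain of implications, the first arrow $\delta \leq_{pp} \delta' \implies \delta \leq_{ft} \delta'$ is nothing but the content of Proposition \ref{prop:pp_implies_ft}, so I would simply cite it. For the second arrow $\delta \leq_{ft} \delta' \implies \delta \leq_{do} \delta'$, I would appeal to Proposition \ref{prop:ft_do}. The one point that genuinely requires attention here is matching the \emph{direction} of the dominance order correctly. Writing $n = |\delta|$ and $m = |\delta'|$, Proposition \ref{prop:ft_do} establishes that $\delta \leq_{ft} \delta'$ forces $\sum_{i = m - k}^{m} \delta_i' \leq \sum_{i = n - k}^{n} \delta_i$ for all $k \geq 0$; unwinding the definition of $\leq_{do}$, this inequality comparing the sums of the last $k+1$ parts of $\delta'$ against those of $\delta$ is exactly the statement $\delta \leq_{do} \delta'$. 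I would spell out this identification of tail-sum inequalities explicitly, so that the reader sees the conclusion of Proposition \ref{prop:ft_do} is read as $\delta \leq_{do} \delta'$ and not its reverse.

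For the two non-implications, I would exhibit the counterexamples already recorded in the text. To witness $\delta \leq_{ft} \delta' \not\implies \delta \leq_{pp} \delta'$, I would invoke Example \ref{exmp:Example6}, which supplies $\delta = 33$ and $\delta' = 222$ (both partitions of $6$) with $\delta \leq_{ft} \delta'$, while $\delta \not\leq_{pp} \delta'$ since $33$ cannot be obtained by grouping parts of $222$. To witness $\delta \leq_{do} \delta' \not\implies \delta \leq_{ft} \delta'$, I would invoke Remark \ref{rem:do}, which gives $\delta = 22$ and $\delta' = 31$ (both partitions of $4$) with $22 \leq_{do} 31$ yet $22 \not\leq_{ft} 31$. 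Together these four steps yield the full statement.

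Since every ingredient is already available, I expect no substantive obstacle: the proposition is essentially a summary collecting Propositions \ref{prop:pp_implies_ft} and \ref{prop:ft_do} together with the examples of Example \ref{exmp:Example6} and Remark \ref{rem:do}. The only place demanding care is the bookkeeping in the second implication, namely confirming that the tail-sum inequality produced by Proposition \ref{prop:ft_do} is oriented as $\delta \leq_{do} \delta'$; once that identification is pinned down, the remaining steps are immediate citations.
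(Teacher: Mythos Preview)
Your proposal is correct and matches the paper's own proof essentially line for line: the paper likewise cites Propositions~\ref{prop:pp_implies_ft} and~\ref{prop:ft_do} for the two implications and invokes Remark~\ref{rem:do} and Example~\ref{exmp:Example6} for the two non-implications. Your extra care in confirming that the tail-sum inequality from Proposition~\ref{prop:ft_do} reads as $\delta \leq_{do} \delta'$ (rather than the reverse) is well placed, since the statement of that proposition in the paper contains a typo on this point.
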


\begin{proof}
In Propositions \ref{prop:pp_implies_ft} and \ref{prop:ft_do}, we conclude that $$\delta \leq_{pp} \delta' \implies \delta \leq_{ft} \delta' \implies \delta \leq_{do} \delta'$$ for any $\delta \vdash g$, $\delta' \vdash g$, and $g \geq 2$. Conversely, Proposition \ref{prop:comparingdeltas} and Corollary \ref{cor:lastdeltaisone} provide criteria and special cases for when $\delta \leq_{ft} \delta'$ implies $\delta \leq_{pp} \delta'$, while Remark \ref{rem:do} shows that $$\delta \leq_{do} \delta' \not \implies \delta \leq_{ft} \delta'$$ in general. Lastly, Example \ref{exmp:Example6} shows that $$\delta \leq_{ft} \delta' \not \implies \delta \leq_{pp} \delta'$$ in general.    
\end{proof}

This concludes the comparison and reveals an interesting property of $\leq_{ft}$, which appears to be a partial order lying between the two classically studied partial orders, $\leq_{pp}$ and $\leq_{do}$, on the set of all partitions of a fixed integer.

\normalsize

\end{document}